\documentclass[a4paper]{amsart}
\usepackage{amssymb}
\usepackage{verbatim}
\usepackage{amscd}
\def\card{{{\operatorname{card}}}}

\numberwithin{equation}{section}
\theoremstyle{plain}
\newtheorem{theorem}[equation]{Theorem}
\newtheorem{corollary}[equation]{Corollary}

\newtheorem{lemma}[equation]{Lemma}
\newtheorem*{(DQ1)}{(DQ1)}
\theoremstyle{definition}

\theoremstyle{remark}

\begin{document}
\title [Markov-Dyck shifts ]{ Markov-Dyck shifts, neutral periodic points and topological conjugacy}
\author{Wolfgang Krieger}
\author{Kengo Matsumoto}
\begin{abstract}
We study the  neutral periodic points of  Markov-Dyck shifts 
of finite strongly connected directed graphs.
Under certain hypothesis on the structure of the graphs $G$ we show, that the topological conjugacy of their Markov-Dyck shifts $M\negthinspace {\scriptstyle D}({G})$ implies the isomorphism of the graphs.
\end{abstract}
\maketitle

\section{Introduction}
Let $\Sigma$ be a finite alphabet, and let $S $ be the left shift on
$\Sigma^{\Bbb Z}$,
$$
S((x_i)_{ i \in {\Bbb Z}})   = (x_{i+1})_{ i \in {\Bbb Z}}, \qquad  
(x_i)_{ i \in {\Bbb Z}} \in \Sigma^{\Bbb Z}.
$$
The closed shift-invariant subsystems of the shifts $S$ are called
subshifts. For an introduction to the theory of subshifts see \cite{Ki} and \cite{LM}. 
A finite word in the symbols of 
$\Sigma$  is called admissible for the subshift 
$X \subset \Sigma^{\Bbb Z} $ if it
appears somewhere in a point of $X$. 
A subshift $X \subset \Sigma^{\Bbb Z} $ is uniquely determined by its language $\mathcal L(X)$
of admissible words. 

In this paper we study the topological conjugacy  of subshifts that are constructed from
finite
directed graphs. We denote a finite directed graph $G$ with
vertex set ${\mathcal V}$ and edge set 
${\mathcal E}$ by $G(\mathcal V  , \mathcal E  )$. 
The source vertex of an edge $e \in {\mathcal E}$ or of a directed path in the graph we denote by
$s$ and its target vertex by $t$ (or  by $s_G$ and  $t_G$, in the case that we have to distinguish between graphs.) 
We consider strongly connected finite directed graphs $G = G(\mathcal V  , \mathcal E  ) $. It is assumed that $G$ is not a cycle.
We recall the construction of the Markov-Dyck shift $M\negthinspace {\scriptstyle D}({G})$ of $G$ (see \cite {M1}).
Let 
$
{\mathcal E}^- = \{e^-: e \in   \mathcal E  \}
$
be a copy of ${\mathcal E}$.
Reverse the directions of the edges in ${\mathcal E}^-$
to obtain the 
edge set
$
{\mathcal E}^+ = \{e^+: e \in   \mathcal E  \}
$
of the reversed graph 
of $G(\mathcal V  , \mathcal E^-)  $.
In this way one has defined a graph $\widetilde{G}( \mathcal V  , {\mathcal E}^- \cup{\mathcal E}^+  )$, where
\begin{align*}
&s_{\widetilde{G}}(e^-) =s_G(e)     , \quad  t_{\widetilde{G}}(e^-) = t_G(e)    ,
\\
&s_{\widetilde{G}}(e^+) =t_G(e)  , \quad t_{\widetilde{G}}(e^+) = s_G(e) , \qquad e \in   \mathcal E .
\end{align*}
With idempotents $\bold 1_V, V \in {\mathcal V},$ 
the set 
${\mathcal E}^- \cup \{\bold 1_V:V \in {\mathcal V}\}\cup {\mathcal E}^+$
is the generating set of the graph inverse semigroup $\mathcal S(G)$ of $G$ (see \cite [Section 10.7]{L}), where, besides
$\bold 1_V^2 = \bold 1_V, V \in {\mathcal V},$ the relations are
$$
\bold 1_U\bold 1_W = 0, \qquad   U, W \in {\mathcal V}, U \neq W,  
$$
$$
f^-g^+ =
\begin{cases}
\bold 1_{s_G(f)}, &\text{if  $f = g$}, \\
0, &\text {if  $f \neq g$},\quad f,g \in {\mathcal E},
\end{cases}
$$
\begin{equation*}
\bold 1_{s_G(f)} f^- = f^- \bold 1_{t_G(f)},\qquad
\bold 1_{t_G(f)} f^+ = f^+ \bold 1_{s_G(f)},\qquad
f \in {\mathcal E}. 
\end{equation*}
The subsemigroup of the semigroup $\mathcal S(G)$, that is generated by $\mathcal E^-$ ($\mathcal E^+$)
we denote by $\mathcal S^-(G)$($\mathcal S^+(G)$), and we refer to the elements of $\mathcal E^-$($\mathcal E^+$) as the generators of $\mathcal S^-(G)$($\mathcal S^+(G)$).

The alphabet of 
$M\negthinspace {\scriptstyle D}({G})$
 is
${\mathcal E}^-\cup {\mathcal E}^+$,
and a word 
$(e_k)_{1 \leq k \leq K}$ 
is admissible for 
$M\negthinspace {\scriptstyle D}({G})$
 precisely if 
$$
\prod_{1 \leq k \leq K}e_k \neq 0.
$$
The directed graphs with a single vertex and $N>1$
loops yield the Dyck inverse monoids (the "polycycliques" of \cite{NP}) $\mathcal D_N$ and the Dyck shifts $D_N$ \cite {Kr1}.

Given a  subshift $X \subset \Sigma^{\Bbb Z}$ we
set
$$
x_{[i,j]}= (x_{k})_{i\leq k \leq j},   
$$
and
$$
X_{[i,j]}
=
\{ x_{[i,j]} : x\in X \}, \quad i,j \in \Bbb Z, i \leq j, \qquad  x \in X, 
$$
and we  use similar notation in the case that indices range in semi-infinite intervals.
Set
$$
\Gamma^+_X(a) = \{ x^+ \in X_{(j, \infty)}: a x^+ \in X_{[i, \infty)}\}, \quad 
a \in  X_{[i,j]}, \quad i,j \in \Bbb Z, i \leq j.
$$
The notation $\Gamma^-$ has the symmetric meaning.
Also set
$$
\omega^+_X(a) = \bigcap_{x^-\in \Gamma^-(a)} \{ x^+ \in \Gamma^+(a):x^-a x^+ \in X\}, \quad
 a \in  X_{[i,j]}, \quad i,j \in \Bbb Z, i \leq j.
$$
The notation $\omega^-$ has the symmetric meaning.
Also set 
$$
 A_n(X) =  
\bigcap _{i\in \Bbb Z}
( \{x  \in X: 
x_{[i,\infty)} \in \omega_X^{+} (x_{[i - n, i)})\}\cap \{x  \in X: 
x_{(-\infty,i]} \in \omega_X^{-} (x_{(i,i + n]})\}),
$$
and
$$
 A(X) = \\
\bigcup_{n \in \Bbb N} A_n(X).
$$
The periodic points in $A(X)$ are called the neutral periodic points of $X$.
In Section 2 we 
clarify the structure of the set of neutral periodic points of a Markov-Dyck shift. 
This includes 
a characterization of  the neutral periodic points of the shift.

For a finite directed graph $G( \mathcal V , \mathcal E )$ 
we denote  by $\mathcal F_G$ the set of  edges that are the single incoming edges of their target  vertices
and we denote by 
$ \mathcal R_G$ the set of $V \in   \mathcal V_G $ that have more than one incoming edge. 
The set $ \mathcal R_G$ is the set of roots of a set of (possibly degenerate) directed rooted trees. \footnote{The graph with one vertex and no edges is generally considered to be a tree. We refer to this graph as the degenerate directed tree.}
We denote the vertex set of the directed  tree with root  $R\in  \mathcal R_G$ by $ \mathcal V_ R$,
and its edge set by $\mathcal F_R$. One has
$$
 \mathcal V = \bigcup_{R \in  \mathcal R_G } \mathcal V_ R ,  \qquad \mathcal F_G = \bigcup_{R \in  \mathcal R_G } \mathcal F_R.
$$
The condition, that $\card  ( \mathcal R_G ) = 1$, is equivalent to the condition, that the graph 
$G(  \mathcal V , \mathcal F_G)$ is a (possibly degenerate) directed tree.
We denote the graph that is obtained by  contracting the non-degenerate trees among the trees 
$G( \mathcal V_ R , \mathcal F_R ), R \in  \mathcal R_G,  $
  to their roots $ R  $ by 
$\widehat{G}( \mathcal R_G , \mathcal E\setminus \mathcal F_G)$. In the graph $\widehat{G}$ the source vertex of an edge $ e \in\mathcal E\setminus \mathcal F_G$  is the root of the tree that has $s_G(e)$ as a leave, and its target vertex is
$t_G(e)$. 
In \cite {Kr2} a Property $(A)$ of subshifts, an invariant of topological conjugacy, was introduced,  and to a subshift $X$ with Property $(A)$  a semigroup $\mathcal S(X)$ was invariantly associated. In 
\cite {HK} it was shown that the Markov-Dyck shift 
$M\negthinspace{\scriptstyle {D}}(G(\mathcal V, \mathcal E))$ of a graph  
$G(\mathcal V, \mathcal E)$ %
 has Property $(A)$, and that 
 $$
 \mathcal S(M\negthinspace {\scriptstyle {D}}
 (G( \mathcal V  , \mathcal E  )))= \mathcal S( \widehat{G}(  \mathcal R_G , \mathcal E\setminus \mathcal F_G) ).
 $$
 In Section 2 we  show that a topological conjugacy of Markov-Dyck shifts of graphs
  $G( \mathcal V  , \mathcal E  )$
 induces an isomorphism of the graphs 
  $\widehat{G}(  \mathcal R_G , \mathcal E\setminus \mathcal F_G)   $, 
  that also preserves certain data pertaining to the configuration of the neutral periodic points of 
  the Markov-Dyck shift.
For Markov-Motzkin shifts (see \cite [Section 4.1]{KM2}) analogous results hold.
       
In Section 3 we 
consider finite directed graphs $G(\mathcal V, \mathcal E)$, such that $\card (\mathcal R_G) = 1$. In this case, following the terminology, that was introduced in \cite {HI}, we say that a  periodic point $p$ of 
$M\negthinspace {\scriptstyle D}({G})$ and its orbit
have negative  multiplier $e \in  \mathcal E  \setminus \mathcal F_G  $, if there exists an $i \in \Bbb Z$ and an $M\in \Bbb N$, such that
$$
 \lambda(p_{[i , i + \pi(p))}) =(\widehat {e}^-)^M.
$$ 
The mapping that assigns to a 
multiplier
$e\in \mathcal E \setminus \mathcal F_G$
the set of periodic points  of
$M\negthinspace {\scriptstyle D}({G})$
with negative 
multiplier
$e$
is an invariant of topological conjugacy \cite [Proposition 4.2]{HIK}. 
We set 
$$
\mathcal M(M\negthinspace {\scriptstyle D}({G})) =\mathcal E \setminus \mathcal F_G,\qquad
\nu(M\negthinspace {\scriptstyle D}({G})) = \card ( \mathcal M(M\negthinspace {\scriptstyle D}({G}))),
$$
We denote 
for a multiplier $e \in  \mathcal E  \setminus \mathcal F_G$, by 
$I^{(e)}_k(M\negthinspace {\scriptstyle D}({G}))$ the number of periodic points of 
$M\negthinspace {\scriptstyle D}({G})$ with negative multiplier $e$ and period $k$, and we set
\begin{align*}
\Lambda^{(e)}(M\negthinspace {\scriptstyle D}({G})) = \min \{ k \in \Bbb N: 
I^{(e)}_k(M\negthinspace {\scriptstyle D}({G})) > 0 \}.
 \end{align*} 
The set  
$\{ \Lambda^{(e)}(M\negthinspace {\scriptstyle D}({G})): e \in  \mathcal E  \setminus \mathcal F_G  \}$
is an invariant of topological conjugacy.
 We denote by $ I^{(0)}_{2k}(M\negthinspace {\scriptstyle D}({G})) $ the number of neutral periodic points of period $2k$ of $M\negthinspace {\scriptstyle D}({G}), k \in \Bbb N$.
We also denote by
$
\Xi^{(e)}_{2k}(M\negthinspace {\scriptstyle D}({G}))  
$
the number of orbits of length $2k, k\in \Bbb N$, with negative multiplier 
$e \in  \mathcal M(M\negthinspace {\scriptstyle D}({G}))$.

In Section 3 we 
consider three families, $\bold{F}_I,\bold{F}_{II}$ and $\bold{F}_{III}$ of finite directed graphs $G(\mathcal V, \mathcal E)$, such that $\card (\mathcal R_G) = 1$. 
For 
the graphs in 
each of these families we introduce canonical models. 
Each  canonical model is specified by a set of parameters, that we call the "data" of the model.
We then establish for the graphs $G$ in each of these families, that the topological conjugacy class of the Markov-Dyck shift of $G$ determines the isomorphism class of the graph $G$. The proof consists in showing, that the invariants 
 $ \nu(M\negthinspace {\scriptstyle D}({G})),$ and $\Lambda^{(e)}, 
e \in \mathcal M(M\negthinspace {\scriptstyle D}({G})) ,$ together with $I^{(e)}_k(M\negthinspace {\scriptstyle D}({G})),\Xi^{(e)}_{2k}, k \in \Bbb N, e \in  \mathcal M(M\negthinspace {\scriptstyle D}({G}))$ 
contain sufficient information to determine the "data" of the canonical model of the graph $G$. We also characterize the Markov-Dyck shifts of the graphs in each of these families within the Markov-Dyck shifts. 

The family $\bold{F}_I$ contains the finite strongly connected directed  graphs  
$G(\mathcal V, \mathcal E)$, such that $\card (\mathcal R_G) = 1$, 
and  
such that all vertices, except the root of the tree 
$G(\mathcal V ,\mathcal F_G)$, have out-degree one.

The family $\bold{F}_{II} $ contains the finite strongly connected directed  graphs $G(\mathcal V, \mathcal E)$ such that $\card (\mathcal R_G) = 1$,  
and such that  all leaves of the 
tree $G(\mathcal V,\mathcal F_G)$ are at level one.

The family $\bold{F}_{III}$ contains the finite strongly connected directed  graphs $G(\mathcal V, \mathcal E)$,  such that 
the graph $G(\mathcal V ,\mathcal F_G)$ is a tree, 
that has the shape of a "V",
and that is such that the two leaves of the tree $G(\mathcal V ,\mathcal F_G)$ have the same out-degree, 
and all interior vertices of the tree $G(\mathcal V ,\mathcal F_G)$ have out-degree one.
 
It had been known, that for finite directed graphs, in which every vertex has at least two incoming edges, topological conjugacy of their Markov-Dyck shifts implies the isomorphism of the graphs
(see \cite[Section 4]{HIK} and \cite [Corollary 3.2]{Kr3}).
Actually, for finite directed graphs, in which every vertex has at least two incoming edges,  the flow equivalence of their Markov-Dyck shifts implies the isomorphism of the graphs. For Dyck shifts this follows from \cite {M2}, and for the general case see \cite {CS} and \cite {Kr4}.

{\bf Acknowledgment.}
Thanks go to  the organizers of the research program "Classification of operator algebras: complexity, rigidity, and dynamics" at the Mittag-Leffler Institute, January - April 2016,
for the opportunity to work on this paper while both authors were  attending the research program.
Thanks go to Toshihiro Hamachi for discussions that lead to the improvement of Section 3.

\section{Neutral periodic points of Markov-Dyck shifts}

We denote the period of a periodic point of a subshift by $\pi$. 
Given a (non-degenerate) rooted tree $\mathcal T( \mathcal V, \mathcal F)$, we denote for $V \in  \mathcal V$ by $b(V)$ the path from the root to $V$.

We continue to consider a strongly connected finite  directed graph 
$G = G(\mathcal V,\mathcal E),$ such that
$ \card (\mathcal E\setminus \mathcal F_G)>1$.
An edge 
 $
e \in \mathcal E \setminus \mathcal F_G,
$
determines a  generator  of $\mathcal S^-(\widehat{G})$($\mathcal S^+(\widehat{G})$), that we denote by 
$ \widehat {e}^-$($ \widehat {e}^+$), and we set
$$
\lambda(e^-) = \widehat {e}^-, \qquad \lambda(e^+) = \widehat {e}^+, \qquad
e \in \mathcal E \setminus \mathcal F_G.
$$A root $R\in \mathcal R_G$ determines an idempotent of $\mathcal S(\widehat{G})$, that we denote by $\widehat{\bold 1}_R$, and
we set
$$
\lambda(f^-) = \lambda(f^+) = \widehat{\bold 1}_R, \qquad f\in \mathcal F_R, R\in \mathcal R_G.
$$

We set
$$
(e^-)^{-1}=e^+, \quad (e^+)^{-1}=e^-, \qquad e \in \mathcal E,
$$
and, more generally,
for a path $b=(b_i)_{1 \leq i \leq I} \in \mathcal L(M\negthinspace {\scriptstyle D}({G}))$
we use the notation
$$
b^{-1}= ((b_i)^{-1})_{I \geq i \geq 1}
$$ 
for the reversed path of $b$.

The set of neutral periodic points of $M\negthinspace {\scriptstyle D}({G})$ we denote by $P^{(0)}(M\negthinspace {\scriptstyle D}({G}))$.
 For a vertex $V \in\mathcal V,$
 we denote by $P^{(0)}(V)$ the set of periodic points $p$ of 
 $M\negthinspace {\scriptstyle D}({G})$, such that there is an
  $ m \in \Bbb Z,$ such that 
$$
V = s_{\widehat {G}}(p_{[m, m +\pi (p))}) ,
$$
and such that
\begin{align*}
 \bold 1_V=\prod _{m \leq j <  m +\pi (p)}p_j  .
\end{align*}

\begin{theorem} 
$$
P^{(0)}(M\negthinspace {\scriptstyle D}({G})) = \bigcup_{ V \in \mathcal V}
P^{(0)}(V).
$$
\end{theorem}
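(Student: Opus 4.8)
The plan is to prove both inclusions by analysing, for a periodic point $p$ of $M_D(G)$ of period $\pi(p)=q$ and for a position $i$, the element $\sigma_i=\prod_{i\le j<i+q}p_j\in\mathcal S(G)$ obtained by multiplying out one period starting at $i$, together with the integer $d(p)$ equal to the number of $\mathcal E^{-}$-letters minus the number of $\mathcal E^{+}$-letters in one period of $p$; this integer is unchanged by each of the reductions defining $\mathcal S(G)$, so it depends only on the $\mathcal S(G)$-element represented, and in particular all the $\sigma_i$ share it. Since $p\in M_D(G)$, every power $\sigma_i^{k}$ is nonzero; using that the nonzero elements of a graph inverse semigroup are exactly the products $g^{+}h^{-}$ with $g,h$ directed paths in $G$ (possibly empty) satisfying the evident vertex condition, and that such a product is a nonzero idempotent precisely when $g$ and $h$ coincide, one checks that $\sigma_i^{k}\neq 0$ for all $k$ forces the reduced form $g^{+}h^{-}$ of $\sigma_i$ to have $g$ and $h$ comparable as paths; comparing lengths shows that $\sigma_i$ is idempotent iff $d(p)=0$, in which case all the $\sigma_i$ are idempotents. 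This reduces the theorem to: (i) if $d(p)=0$ then $p$ is neutral and some $\sigma_i$ equals $\bold 1_{V}$; and (ii) if $d(p)\neq 0$ then $p$ is not neutral (observe that, conversely, if some $\sigma_i$ is a vertex idempotent then $d(p)=0$, so (i) and (ii) together identify the neutral periodic points with $\bigcup_{V}P^{(0)}(V)$).

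For the existence of $i$ with $\sigma_i=\bold 1_V$ — hence, together with (i), the inclusion $\supseteq$ — I would consider, for a period starting at some $i_0$, the partial products $\prod_{i_0\le j<i_0+l}p_j$ ($0\le l\le q$) and the excess $\nu(l)$ of $\mathcal E^{+}$-letters over $\mathcal E^{-}$-letters in $p_{[i_0,i_0+l)}$, a lattice path going up by one on each $\mathcal E^{+}$-letter, down by one on each $\mathcal E^{-}$-letter, and (as $d(p)=0$) returning to $0$. The key claim, proved by a backward induction on $l$, is that at a position $l_{\ast}$ where $\nu$ attains its maximum the partial product lies in $\mathcal S^{+}(G)$ (or is a vertex idempotent): tracing the word backwards from $l_{\ast}$, a step that could have created an $\mathcal E^{-}$-part either strictly decreased $\nu$ or was itself preceded by a position of height at least that at $l_{\ast}$, and finiteness of the period rules out an infinite descent. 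Cutting the period at $l_{\ast}$ then exhibits a cyclic conjugate of the period word whose product telescopes, by repeated use of $f^{-}f^{+}=\bold 1_{s_G(f)}$, to $\bold 1_{W}$ with $W=s_{\widetilde{G}}(p_{i_0+l_{\ast}})$, so that $p\in P^{(0)}(W)$.

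It remains, for (i), to verify that $d(p)=0$ forces $p\in A_q(M_D(G))$, i.e.\ $p_{[i,\infty)}\in\omega_{X}^{+}(p_{[i-q,i)})$ for all $i$, and symmetrically for $\omega^{-}$. Fixing $i$ and $x^{-}\in\Gamma^{-}(p_{[i-q,i)})$, every finite subword of $x^{-}p_{[i-q,i)}p_{[i,\infty)}$ either lies in the admissible word $x^{-}p_{[i-q,i)}$, or in the subword $p_{[i-q,\infty)}$ of $p$, or straddles into $p_{[i,\infty)}$; for the last kind, periodicity together with the idempotency of $\sigma_i$ lets one replace the part inside $p_{[i,\infty)}$, modulo left multiplication by an idempotent, by one of finitely many partial products, and the point is that every cancellation effected by one period of $p$ consumes $\mathcal E^{-}$-letters of that same period and never reaches into $x^{-}$, so the straddling product stays nonzero. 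The $\omega^{-}$-statement follows by applying the reversal $b\mapsto b^{-1}$, under which $M_D(G)$, neutrality, and the condition $d(p)=0$ are all invariant.

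For (ii) I would argue the contrapositive of $\subseteq$ by a direct construction. If $d(p)\neq 0$ then, replacing $p$ by its reversal if necessary, one may assume the period has, net, more $\mathcal E^{+}$- than $\mathcal E^{-}$-letters, so that reading $p_{[i,\infty)}$ forward performs arbitrarily many net "pops". Given any $n$, using that $G$ is strongly connected and not a cycle to have enough edges available, I would build $x^{-}\in\Gamma^{-}(p_{[i-n,i)})$ which pushes at some far-back position an edge distinct from the one that a pop of $p_{[i,\infty)}$ at the matching depth looks for; then a long enough subword of $x^{-}p_{[i-n,i)}p_{[i,\infty)}$, running from that position into $p_{[i,\infty)}$, multiplies to $0$, so $p_{[i,\infty)}\notin\omega_{X}^{+}(p_{[i-n,i)})$ and hence $p\notin A_n(M_D(G))$; as $n$ is arbitrary, $p$ is not neutral. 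I expect the main obstacle to be the claim in the second paragraph that the maximal partial product lies in $\mathcal S^{+}(G)$ — the one genuinely nontrivial input, which makes the passage from "some $\sigma_i$ is idempotent" to "some $\sigma_i=\bold 1_V$" work — while the remaining manipulations inside the graph inverse semigroup, though requiring care, are routine.
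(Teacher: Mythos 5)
Your route is genuinely different from the paper's and most of it is sound. You work throughout in $\mathcal S(G)$ with the balance number $d(p)$ and the height function of the period word: the cut at the extremal height does show (with the observation that an unmatched opener, respectively closer, on the wrong side of the cut would violate extremality, since the reduced form of any admissible word is a plus-block followed by a minus-block) that for an admissible periodic point, $d(p)=0$ if and only if some rotated period product equals a vertex idempotent $\bold 1_V$; and your third paragraph, showing that such points are neutral because each period block is balanced on its own, so no cancellation initiated in the tail ever reaches the arbitrary left context, gives the inclusion $\supseteq$, which the paper in fact leaves implicit (its proof of Theorem 2.1 only treats the direction ``neutral $\Rightarrow$ some $\sigma_m=\bold 1_V$''). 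The paper instead argues in two stages: first in $\mathcal S(\widehat G)$ of the contracted graph, where \emph{every} vertex has at least two incoming edges, the non-idempotent case is excluded by a clash with neutrality; then the residual case in $\mathcal S(G)$ --- a rotated period product equal to a nonempty product of edges of $\mathcal F_G$ --- is excluded by \emph{periodicity} (a directed path in a tree cannot close up), not by neutrality.

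That two-stage structure is exactly what your fourth paragraph is missing, and this is a genuine gap. To produce the clash you must insert, at the position matched by a deep pop $e^+$ of the tail, an edge $g\neq e$ with $t_G(g)=t_G(e)$ (the same target is forced by admissibility of the modified left context at that junction). Such a $g$ exists only when $e\in\mathcal E\setminus\mathcal F_G$, i.e.\ $t_G(e)\in\mathcal R_G$; if the popped edge is the unique incoming edge of its target, there is no admissible alternative to push, and your construction is blocked. ``Strongly connected and not a cycle'' only gives $\mathcal R_G\neq\emptyset$; it does not give that the specific edges popped at depth beyond $n$ admit an alternative. What you need, and must prove, is that when $d(p)\neq 0$ the tail pops edges of $\mathcal E\setminus\mathcal F_G$ at unbounded depth. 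This is true, but requires an extra argument: since all powers of $\sigma_i$ are nonzero, the net per-period popped (or pushed) path is a closed directed path in $G$, and a closed path cannot consist solely of edges of the forest $\mathcal F_G$; equivalently, one passes to $\widehat G$ as the paper does. Without this supplement your step ``an edge distinct from the one that a pop at the matching depth looks for'' is unjustified and can fail; with it, your single-stage argument goes through.
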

\begin{proof} 

For the proof  let there be given a neutral periodic point $p$ of 
$M\negthinspace {\scriptstyle D}({G})$.
With $\widehat {K}_+\in \Bbb Z_+,\widehat {K}_-\in \Bbb Z_+$, and
$$
\widehat {e}^+_{\widehat {k}(+)} \in\widehat { \mathcal E}^+,    \qquad \widehat {K}_+
\geq \widehat {k}(+)>0 ,
$$
$$
\widehat {e}^-_{\widehat {k}(-)} \in \widehat {\mathcal E}^-,
\qquad 0 <\widehat {k}(-)\leq \widehat {K}_-,
$$
and with an $  {R}\in  \mathcal R_G  $, write
$$
\prod _{0 \leq j <  \pi (p)}\lambda(p_j) =
( \prod _{\widehat {K}_+\geq \widehat {k}(+)>0}\widehat {e}^+_{\widehat {k}(+)}) \ 
 \widehat {\bold 1}_{ R} \
(  \prod _{0 <\widehat {k}(-)\leq \widehat {K}_-}\widehat {e}^-_{\widehat {k}(-)}).
$$
Note that
$$
s_{\widehat{G}}(\widehat {e}_{\widehat {K}_+}   ) = t_{\widehat{G}}(\widehat {e}_{\widehat {K}_-}   ).
$$
 Assume that
 \begin{align*}
 (  \prod _{0 <\widehat {k}(-)\leq \widehat {K}_-}\widehat {e}^-_{\widehat {k}(-)})
 ( \prod _{\widehat {K}_+\geq \widehat {k}(+)>0}\widehat {e}^+_{\widehat {k}(+)})
 \in \mathcal S^-( \widehat {G})
  \setminus 
  \{\widehat{\bold 1}_{R} \}. \tag {2.1}
 \end{align*}
Choose an $ \widehat{m}\in [0, \pi(p))  $, such that
$$
\prod _{0 \leq j <  \widehat{m}}\lambda(p_j)  
= \prod _{\widehat {K}_+\geq \widehat {k}(+)>0}
\widehat {e}^+_{\widehat {k}(+)}.
$$
Then also
$$
\prod _{\widehat{m} \leq j <  \pi (p)}\lambda(p_j) 
 = \prod _{0 <\widehat {k}(-)\leq \widehat {K}_-}
\widehat {e}^-_{\widehat {k}(-)},
$$
and therefore by (2.1)
$$
\prod _{\widehat{m} \leq j < \widehat{m}+\pi(p)}\lambda(p_j) 
 =  (  \prod _{0 <\widehat {k}(-)\leq \widehat {K}_-}\widehat {e}^-_{\widehat {k}(-)})
 ( \prod _{\widehat {K}_+\geq \widehat {k}(+)>0}\widehat {e}^+_{\widehat {k}(+)})
 \in    \mathcal S^-( \widehat {G}).
$$
It follows that the word $p_{[\widehat{m}, \widehat{m}+\pi(p)) } $ has a suffix $e^-b$, that is uniquely determined by the condition, that $ e^- \in  \mathcal E^- \setminus \mathcal F^-_G$ and that $b$ is  the empty word, or  $b =(b_i)_{1 \leq i \leq I}$ is a word in 
$\mathcal L(M\negthinspace {\scriptstyle D}({G}))$
 such that
 $$
 \prod_{1 \leq i \leq I} \lambda (b_i) = \bold 1_{t(e)}.
 $$
Let $g\neq e$ be an incoming edge of $t_{\widetilde{G}}(e^-)$. For $K \in \Bbb N$ the words 
$$
e^-bp_{[\widehat{m}, \widehat{m}+K\pi(p)) } 
$$ 
and 
$$
p_{[\widehat{m}, \widehat{m}+K\pi(p)) } p_{[\widehat{m}, \widehat{m}+K\pi(p)) }^{-1} b^{-1}g^+
$$
are admissible
for 
$M\negthinspace {\scriptstyle D}({G})$.
However the word
$$
e^-bp_{[\widehat{m}, \widehat{m}+K\pi(p)) } p_{[\widehat{m}, \widehat{m}+K\pi(p)) }^{-1} b^{-1}g^+
$$
is not admissible for 
$M\negthinspace {\scriptstyle D}({G})$. This contradicts the neutrality of $p$. Under the assumption that
  \begin{align*}
 (  \prod _{0 <\widehat {k}(-)\leq \widehat {K}_-}\widehat {e}^-_{\widehat {k}(-)})
 ( \prod _{\widehat {K}_+\geq \widehat {k}(+)>0}\widehat {e}^+_{\widehat {k}(+)})
 \in \mathcal S^+( \widehat {G})
  \setminus 
  \{\widehat{\bold 1}_{R} \},
 \end{align*}
 one has the symmetric argument. We have shown that
  \begin{align*}
 \lambda(p_{[\widehat{m}, \widehat{m}+\pi(p)) }  ) =\widehat{\bold 1}_{R}. \tag {2.2}
  \end{align*}

To repeat this reasoning, 
with $K_+ \in\Bbb Z_+,K_-\in \Bbb Z_+$, and
$$
e^+_{k(+)} \in \mathcal E^+,    \qquad K_+\geq k(+)>0 ,
$$
$$
e^-_{k(-)} \in \mathcal E^-,\qquad 0 <k(-)\leq K_-,
$$
 and with a $ {V}\in   {\mathcal V}  $, write
$$
\prod _{ \widehat{m} \leq j < \widehat{m}+  \pi (p)}p_j =
( \prod _{K_+\geq k(+)>0}e^+_{k(+)}) \ \bold 1_{ {V}} \thinspace
 ( \prod _{0 <k(-)\leq K_-}e^-_{k(-)}).
$$
Note that
$$
s_{\widetilde{G}}(  e^+_{K(+)} ) = t_{\widetilde{G}}(  e^-_{K(-)}  ).
$$
 Assume that
 \begin{align*}
 (  \prod _{0 < {k}(-)\leq {K}_-}e^-_{{k}(-)})
 ( \prod _{ {K}_+\geq  {k}(+)>0}e^+_{ {k}(+)})
 \in \mathcal S^-({G})
  \setminus 
  \{\bold 1_{{V}} \}. \tag {2.3}
 \end{align*}
Choose an ${m}\in [0, \pi(p))$, such that
$$
\prod _{ \widehat{m} \leq j <\widehat{m}+{m}} p_j
 = \prod _{ {K}_+\geq  {k}(+)>0}e^+_{ {k}(+)}.
$$
Then also
$$
\prod _{ \widehat{m}+{m} \leq j < \widehat{m}+ \pi(p)} p_j
 = \prod _{0 < {k}(-)\leq  {K}_-}e^-_{ {k}(-)},
$$
and therefore by (2.3)
$$
\prod _{m \leq j <m + \pi(p)} p_j
 =  (  \prod _{0 < {k}(-)\leq  {K}_-}e^-_{ {k}(-)})
 ( \prod _{ {K}_+\geq  {k}(+)>0}e^+_{ {k}(+)})
 \in \mathcal S^-(G).
$$
It follows from this and from (2.2),  that there is 
an $R \in \mathcal R_G$, such that there is 
a directed path $(f_l)_{1\leq l \leq L}, L\in \Bbb N,$ in  
 the tree $G( \mathcal V_R ,   \mathcal F_R )$,
 such that
$$
\prod _{m \leq j <m + \pi(p)} p_j= \prod _{1\leq l \leq L}f^-_l.
$$
This contradicts the periodicity of $p$. Under the assumption that
 \begin{align*}
 (  \prod _{0 < {k}(-)\leq {K}_-}e^-_{{k}(-)})
 ( \prod _{ {K}_+\geq  {k}(+)>0}e^+_{ {k}(+)})
 \in \mathcal S^+({G})
  \setminus 
  \{\bold 1_{{V}} \}. 
 \end{align*}
one has the symmetric argument. This confirms that
$$
 \bold 1_V=\prod _{m \leq j <  m +\pi (p)}p_j ,
$$
and completes the proof.
\end{proof}

The set of neutral periodic points  of a subshift $X \subset \Sigma^\Bbb Z$ carries a pre-order relation $\lessapprox(X)$ (see \cite {Kr2}). For neutral periodic points $q$ and $r$ of $X$ one has $q  \gtrapprox(X) r$, if there exists a point in $A(X)$, that is left asymptotic to the orbit of  $q$ and right 
 asymptotic to the orbit of  $r$.
The equivalence relation that is derived from  $\lessapprox(X)$ we denote by  
$\approx\negthinspace \negthinspace(X)$.

We set
$$
P^{(0)}_R = \bigcup_{V \in \mathcal V_R:}P^{(0)}(V), \qquad R\in  \mathcal R_G.
$$
The proof of the following lemma is similar to the proof of Theorem 3.2 of \cite {HK}.
 
\begin{lemma}
The $\approx\negthinspace \negthinspace(X)$-equivalence class of   $p \in  P^{(0)}_R  ,
 R \in  \mathcal R_G,$ coincides with $P^{(0)}_R$.
\end{lemma}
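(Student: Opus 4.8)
The plan is to establish the two inclusions that together give the statement: $P^{(0)}_R$ is contained in the $\approx(X)$-class of $p$, and the $\approx(X)$-class of $p$ is contained in $P^{(0)}_R$. Since $\approx(X)$ is the equivalence relation associated with the pre-order $\lessapprox(X)$, the first inclusion reduces to proving, for arbitrary $q\in P^{(0)}(W)$ and $p\in P^{(0)}(V)$ with $V,W\in\mathcal V_R$, that $q\lessapprox(X)p$ and $p\lessapprox(X)q$; the second reduces to proving that a neutral periodic point $q$ with $q\lessapprox(X)p$ or $p\lessapprox(X)q$ lies in $P^{(0)}_R$. By the Theorem every neutral periodic point $q$ lies in $P^{(0)}(W)$ for some vertex $W$, which determines the root $R'$ with $W\in\mathcal V_{R'}$, so the second inclusion becomes the assertion $R'=R$.

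For the first inclusion I would construct bridging points explicitly. Fix period words $\bar q$ of $q$ and $\bar p$ of $p$ as provided by the definitions of $P^{(0)}(W)$ and $P^{(0)}(V)$ (so $\bar q$ has source $W$ and $\prod_j\bar q_j=\bold 1_W$, and similarly for $\bar p$), and let $b(W)=g_1\cdots g_l$ and $b(V)=f_1\cdots f_k$ be the paths from $R$ to $W$ and to $V$ in the contracting subtree. Put $c=g_l^+g_{l-1}^+\cdots g_1^+\,f_1^-f_2^-\cdots f_k^-$, a path in $\widetilde G$ from $W$ through $R$ to $V$, and let $x$ be the concatenation of a left tail $\cdots\bar q\,\bar q$ that ends at state $W$, the word $c$, and a right tail $\bar p\,\bar p\cdots$ that begins at state $V$. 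Then $x$ is a point of $M_D(G)$: the product along any finite window stays nonzero, since $c$ merely stacks the symbols $g_l^+,\dots,g_1^+$ and then the symbols $f_1^-,\dots,f_k^-$ of edges of $\mathcal F_R$, all junctions are vertex-compatible, and---the edges of the subtree being the single incoming edges of their targets---no window product ever vanishes through a factor $e^-e'^+$ with $e\neq e'$. The essential claim is that $x\in A(X)$: one takes a window length $n$ exceeding $\pi(q)$, $\pi(p)$, the length of $c$, and the window lengths witnessing $q,p\in A(X)$, and verifies the two $\omega^{\pm}$-conditions defining $A_n(X)$; the decisive structural point is once more that the symbols of $c$ are built from edges of $\mathcal F_R$, whose targets carry a unique incoming edge, so that near the junction the relevant contexts are reconstructed without ever having to choose between two incoming edges of a vertex---exactly the mechanism of the proof of Theorem~3.2 of \cite{HK}. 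This gives $q\lessapprox(X)p$; the symmetric construction, with $f_k^+f_{k-1}^+\cdots f_1^+\,g_1^-g_2^-\cdots g_l^-$ in place of $c$, gives $p\lessapprox(X)q$. Hence $q\approx(X)p$, and $P^{(0)}_R$ is contained in the $\approx(X)$-class of $p$.

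For the second inclusion, let $q$ be a neutral periodic point with $q\approx(X)p$, take $W$ and $R'$ as above, and assume, for a contradiction, that $R'\neq R$. Then there is $y\in A(X)$ left asymptotic to the orbit of $p$ and right asymptotic to the orbit of $q$. Applying $\lambda$ to $y$ and using the computation in the proof of the Theorem, a full period of $y$ far to the left has $\lambda$-product $\widehat{\bold 1}_R$ and a full period far to the right has $\widehat{\bold 1}_{R'}$; thus $y$ sits asymptotically over the subtree of $R$ on the left and over the subtree of $R'$ on the right, and, these subtrees being disjoint, $y$ must contain in its transition region an uncancelled symbol $e^{\pm}$ with $e\in\mathcal E\setminus\mathcal F_G$, so that $t_G(e)$ has a second incoming edge $g\neq e$. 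Now, just as in the proof of the Theorem and of Theorem~3.2 of \cite{HK}, a length-$n$ window $w$ of $y$ placed next to this uncancelled symbol, on the side to which its bracket points, does not by itself reduce it, so on the far side of $w$ there is an extension that is compatible with $w$ but applies at $t_G(e)$ the wrong bracket $g$; concatenated with the remaining part of $y$ it produces the vanishing factor $e^-g^+$ or $g^-e^+$ and hence is not a word of $M_D(G)$. As this holds for every $n$, $y\notin A(X)$, a contradiction, so $R'=R$ and $q\in P^{(0)}_R$.

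The step I expect to be the main obstacle is the verification, in the second paragraph, that the glued sequence $x$ really belongs to $A(X)$---pinning down the right window length and checking the $\omega^{\pm}$-conditions near the junction---together with the dual extension argument of the third paragraph; both rest on the fact that the edges of a contracting subtree are the single incoming edges of their targets, and are carried out along the lines of the proof of Theorem~3.2 of \cite{HK}.
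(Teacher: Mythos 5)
Your proof is correct and takes essentially the same route as the paper: the same bridging point through the root (left periodic tail, ascend the contracting subtree by $+$-symbols, descend by $-$-symbols, right periodic tail), justified by the tree edges being the unique incoming edges of their targets, and, for distinct roots, the same contradiction obtained from an uncancelled symbol $e^{\pm}$ with $e\in\mathcal E\setminus\mathcal F_G$ in the transition region together with a second incoming edge $g$ of its target root. The only difference is one of explicitness: the paper writes the ``wrong bracket'' context concretely as $g^-b(V)^-$ followed by $J$ full periods of $p$ and the transition prefix, which is precisely the mirroring extension whose existence you assert and correctly attribute to the unique-incoming-edge mechanism.
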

\begin{proof} 
The proof comes in two parts.
For the first part
let $R \in  \mathcal R_G$, let $U, W\in  \mathcal V_R,$ and
$q, r \in P^{(0)}_R$, and let $j, k \in \Bbb Z, $ be such that
$$
U = s_{\widetilde G}(p_{[j, j +\pi (q))}) , \qquad
W = s_{\widetilde G}(r_{[k, k +\pi (r))}) ,
$$
and such that
\begin{align*}
 \bold 1_U=\prod _{j \leq i < j +\pi (p)}q_i  , \qquad
 \bold 1_W=\prod _{k \leq i < k +\pi (r)}r_i  .
\end{align*}
Let a point $x \in M\negthinspace {\scriptstyle D}({G}))$ be given by
$$
x_{(-\infty, 0]}= q_{(-\infty, j]}b^{-1}(U), \qquad
x_{(0,\infty)}=b(W)r_{(k,\infty)}.
$$
One has
$$
x \in A_{\max \{\pi(q), \pi(r)\}}(M\negthinspace {\scriptstyle D}({G})).
$$
This follows since the edges in the paths $b(U)$ and $b(W)$ are by construction the only incoming edges of their target vertices. We have proved that
$
q\approx\negthinspace \negthinspace(X)r.
$

For the second part 
let $R, R^\prime \in \mathcal R_G,$
\begin{align*}
R \neq R^\prime, \tag {2.4}
\end{align*}
 let 
 $$
 V \in \mathcal V_R, p\in P^{(0)}_R,\quad V^\prime \in  \mathcal V_{R^\prime}, p^\prime\in P^{(0)}_{R^\prime},
 $$
  and let $j, j^\prime \in \Bbb Z $ be such that
$$
V = s_{\widetilde{G}}(p_{[j, j +\pi (p))}) ,\quad V^\prime = s_{\widetilde{G}}
(p^\prime_{[j^\prime, j^\prime+\pi (p^\prime))}) ,
$$
and
\begin{align*}
 \bold 1_V=\prod _{j \leq i<  j +\pi (p)}p_i  , \quad 
 \bold 1_{V^\prime}=\prod _{j^\prime \leq i^\prime<  j^\prime +\pi (p^\prime)}p^\prime_{i^\prime}.
\end{align*}
We prove that  $p$ and $p^\prime$ are 
$ \lessapprox \negthinspace(M\negthinspace {\scriptstyle D}({G}))$-incomparable. Assume that 
$$
p  \lessapprox\negthinspace(M\negthinspace {\scriptstyle D}({G})) \ p^\prime,
$$
 and let
 $J\in \Bbb N$, and 
\begin{align*}
x \in A_J(M\negthinspace {\scriptstyle D}({G}))), \tag {2.5} 
\end{align*}
and  $m, m^\prime \in \Bbb Z, m < m^\prime,$ be  such that
$$
x_{(\infty, m)} =   p_{(\infty, j) },\qquad x_{[m^\prime  ,   \infty)} =
 p^\prime_{[m^\prime  ,   \infty)} .
$$
With $K, K^\prime\in \Bbb Z_+, $ and 
$$
e_{k}\in \mathcal E \setminus \mathcal F_G,\qquad K \geq k > 0,
$$
$$
e^\prime_{k^\prime}\in \mathcal E \setminus \mathcal F_G,\qquad 
0<  k^\prime \leq   K^\prime,
$$
and with $Q \in \mathcal R_G$,
$$
t_G(e_K  ) = Q = t_G(e_{K^\prime}),
$$
 write
$$
\prod _{m \leq i <  m^\prime}\lambda(x_i) = 
(\prod_{K \geq k > 0}\widehat {e}^+_k) \
 \widehat {\bold 1}_{ Q} \
(  \prod _{0 < {k^\prime}\leq  {K^\prime}}\widehat {e^{\prime}}^-_{ {k^\prime}}).
$$
Assumption (2.4) 
implies that $(K, K^{\prime}) \neq (0,0).$ Assume $K > 0.$
Then we have an $M\in [ m, m^{\prime})$, such that
$
x_M = e_K^+,
$
and
$$
\prod_{m\leq i < M}\lambda (x_i) =  \widehat {\bold 1}_{ Q} .
$$
Let $g\neq e_K,$ be an incoming edge of $Q$. The word
$$
g^-b(V)^-p_{[ m - J\pi(p) ,   m)}x_{[ m  ,   M)}
$$
is admissible for $M_D(G)$. However the word
$$
g^-b(V)^-p_{[ m - J\pi(p) ,   m)}x_{[ m  ,   M)}e_K^+
$$
is not. This contradicts (2.5).
Under the assumption that $K^\prime >0,$ one has the symmetric argument. We have shown that $p\not \lessapprox \negthinspace\negthinspace(M\negthinspace {\scriptstyle D}({G})))\thinspace p^\prime$.
 \end{proof}

Denoting by
$\Pi_n(Y)$ the number of points of period $n$ of a shift-invariant set $Y
\subset \Sigma^{\Bbb Z} ,$
the zeta function of $Y$ 
is given by
$$
\zeta_Y(z) = e^{\sum_{n \in {\Bbb N}} \frac{\Pi_n(X)z^n}{n}}.
$$

For the finite strongly connected directed graph $G( \mathcal V  , \mathcal E )$,  we  vertex weigh the graph
$
\widehat{G}({ \mathcal R_G} , \mathcal E\setminus \mathcal F_G )
$
by assigning to its vertices $R \in  \mathcal R_G$ the zeta function of $P^{(0)}_R.$

\begin{corollary} 
For finite strongly connected directed graphs $G( \mathcal V  , \mathcal E )$  the  topological  conjugacy of the Markov-Dyck shifts  $M\negthinspace {\scriptstyle D}({G})$  implies  the  isomorphism  of the 
vertex weighted  graphs 
$
\widehat{G}({ \mathcal R_G} , \mathcal E\setminus \mathcal F_G )
$
 with weights $(\zeta_{P^{(0)}_R})_{R\in \mathcal R_G}$.
\end{corollary}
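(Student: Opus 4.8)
The plan is to run the conjugacy through two constructions and then to check that they yield the same identification of the sets of roots. Let $G$ and $G'$ be finite strongly connected non-cyclic directed graphs and let $\varphi\colon M_D(G)\to M_D(G')$ be a topological conjugacy. Since Markov-Dyck shifts have Property~$(A)$ and $\mathcal S(M_D(G))=\mathcal S(\widehat G(\mathcal R_G,\mathcal E\setminus\mathcal F_G))$ by \cite{HK}, while by \cite{Kr2} the semigroup $\mathcal S(\,\cdot\,)$ is a topological conjugacy invariant of subshifts with Property~$(A)$, the conjugacy $\varphi$ induces an isomorphism $\Phi\colon\mathcal S(\widehat G)\to\mathcal S(\widehat{G}')$. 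It carries the nonzero idempotents of $\mathcal S(\widehat G)$ bijectively onto those of $\mathcal S(\widehat{G}')$, hence induces a bijection $\psi_0\colon\mathcal R_G\to\mathcal R_{G'}$ with $\Phi(\widehat{\bold 1}_R)=\widehat{\bold 1}_{\psi_0(R)}$; as $\Phi$ also respects the products of generators, which encode the incidence relation of $\widehat G$, and $\widehat G$, $\widehat{G}'$ are strongly connected, $\psi_0$ extends to an isomorphism $\psi\colon\widehat G\to\widehat{G}'$ of directed graphs (the rigidity of graph inverse semigroups of finite strongly connected directed graphs, cf.\ \cite{L}).

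On the dynamical side, the set $A(X)$, the preorder $\lessapprox(X)$ and hence the equivalence relation $\approx\!(X)$ are defined from $\mathcal L(X)$ and the asymptotic behaviour of points of $X$ alone, so they are invariants of topological conjugacy, and $\varphi$, commuting with the shift, maps periodic points to periodic points of the same period. By Theorem 2.1 one has $P^{(0)}(G)=\bigcup_{R\in\mathcal R_G}P^{(0)}_R$, and by Lemma 2.2 the $P^{(0)}_R$, $R\in\mathcal R_G$, are precisely the $\approx\!(M_D(G))$-equivalence classes contained in $P^{(0)}(G)$; likewise for $G'$. Hence $\varphi$ induces a bijection $\sigma\colon\mathcal R_G\to\mathcal R_{G'}$ with $\varphi(P^{(0)}_R)=P^{(0)}_{\sigma(R)}$, and for each $R$ it restricts to a period-preserving bijection of $P^{(0)}_R$ onto $P^{(0)}_{\sigma(R)}$; these shift-invariant sets therefore have the same number of points of each period, and $\zeta_{P^{(0)}_R}=\zeta_{P^{(0)}_{\sigma(R)}}$.

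It remains to show $\sigma=\psi_0$, and here I would use the canonical correspondence — implicit in the proof in \cite{HK} that Markov-Dyck shifts have Property~$(A)$, and underlying Lemma 2.2, whose proof follows \cite[Theorem 3.2]{HK} — between the nonzero idempotents of $\mathcal S(M_D(G))=\mathcal S(\widehat G)$ and the $\approx\!(M_D(G))$-classes $P^{(0)}_R$ of neutral periodic points. Concretely, by equation~(2.2) in the proof of Theorem 2.1 a point $p\in P^{(0)}_R$ is carried by $\lambda$, on a suitably positioned window of one period, to the idempotent $\widehat{\bold 1}_R$, so that $p$ is attached to $\widehat{\bold 1}_R$; this attachment being intrinsic, it is preserved by the conjugacy-induced isomorphism $\Phi$. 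Consequently $\Phi(\widehat{\bold 1}_R)=\widehat{\bold 1}_{\sigma(R)}$, which combined with $\Phi(\widehat{\bold 1}_R)=\widehat{\bold 1}_{\psi_0(R)}$ gives $\sigma=\psi_0$. Hence $\psi$ is an isomorphism of the vertex weighted graphs, sending the weight $\zeta_{P^{(0)}_R}$ of $R$ to the weight $\zeta_{P^{(0)}_{\psi_0(R)}}$ of $\psi_0(R)$, and the corollary follows.

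Everything in the first two paragraphs is formal once the cited invariance results are granted; the main obstacle is the identification $\sigma=\psi_0$, i.e.\ matching the syntactic bijection of roots produced by Property~$(A)$ with the dynamical one coming from the action of $\varphi$ on the $\approx$-classes of neutral periodic points. Making this precise requires unwinding the construction of $\mathcal S(X)$ in \cite{Kr2} and the computation of $\mathcal S(M_D(G))$ in \cite{HK} far enough to see that the idempotents of the invariant semigroup are canonically and conjugacy-equivariantly labelled by $\mathcal R_G$ via the classes $P^{(0)}_R$ --- presumably the substance of the ``data pertaining to the configuration of the neutral periodic points'' referred to in the introduction.
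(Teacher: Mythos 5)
Your proposal is correct and takes essentially the same route as the paper: the paper's own proof consists only of the observations that Lemma 2.2 puts $\mathcal R_G$ in one-to-one correspondence with the $\approx\negthinspace(M_D(G))$-classes of neutral periodic points and that a topological conjugacy carries $\approx$-classes to $\approx$-classes (preserving periods, hence the weights $\zeta_{P^{(0)}_R}$), with the isomorphism of $\widehat{G}$ and its compatibility with this bijection of roots taken, exactly as you indicate, from the invariance of the associated semigroup $\mathcal S(M_D(G))=\mathcal S(\widehat{G})$ via Property $(A)$ in \cite{Kr2} and \cite{HK}. The identification $\sigma=\psi_0$ that you flag as the remaining obstacle is left implicit in the paper as well, being built into the construction of the invariant semigroup from the neutral periodic points, so your write-up is if anything more explicit than the paper's.
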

\begin{proof} 
There is a canonical projection $\chi$ of the set of points in 
$M\negthinspace {\scriptstyle D}({G})$, that are left-asymptotic to a point in 
$P^{(0)}(M\negthinspace {\scriptstyle D}({G}))$, and also right-asymptotic to point in 
$P^{(0)}(M\negthinspace {\scriptstyle D}({G}))$  onto the associated semigroup
$$
 \mathcal S(M\negthinspace {\scriptstyle D}({G}))= \mathcal S( \widehat{G}(  \mathcal R_G , \mathcal E\setminus \mathcal F_G) ).
 $$
(compare \cite [Section 3] {Kr3} and \cite [Section 3] {Kr2}). A topological conjugacy induces an isomorphism of the associated semigroups and it acts accordingly on the  inverse images under $\chi$ of each of the elements in the set
$$
\{ \bold 1_R  : R \in  \mathcal R_G \} \cup \{  e^- : e \in  \mathcal E \setminus \mathcal F_G  \} \subset \mathcal S( \widehat{G}(  \mathcal R_G , \mathcal E\setminus \mathcal F_G) ),
$$
(see \cite [Section 2]{Kr5}).
By Lemma 2.2 
$$
\chi^{-1}(\bold 1_R) = P_R^{(0)}, \qquad  R \in \mathcal R_G,
$$
and
$$
\bigcup_{R \in \mathcal R_G}\chi^{-1}(\bold 1_R) = P^{(0)}(M\negthinspace {\scriptstyle D}({G})).\qed
$$
\renewcommand{\qedsymbol}{}
\end{proof}

To a vertex $V\in  \mathcal V$ we associate the circular code $\mathcal C_V$ that contains the  words  $(c_i)_{1 \leq i \leq I} \in \mathcal L(M\negthinspace {\scriptstyle D}({G}))$
such that 
$$
s_{\widetilde {G}}( c )  =  t_{\widetilde {G}}( c ) = V, 
$$
and 
$$
\prod_{1\leq i \leq I}c_i = \bold 1_V,
$$
$$
\prod_{1\leq i \leq J}c_i \neq  \bold 1_V, \qquad 1 < J < I.   
$$
The generating function of $\mathcal C_V$ we denote by $\varphi_V$.

\begin{corollary}
For $G( \mathcal V, \mathcal E)$,
$$
\zeta_{P^{(0)}(M\negthinspace {\scriptscriptstyle D}({G}))} = \prod_{V \in \mathcal V} \frac {1}{1- \varphi_V}.
$$
\end{corollary}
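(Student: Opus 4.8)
The plan is to count neutral periodic points of $M_D(G)$ directly by period, using Theorem 2.1 to decompose the set, and then to package the count into a product over vertices. By Theorem 2.1 every neutral periodic point $p$ lies in $P^{(0)}(V)$ for a unique $V \in \mathcal V$, namely the vertex with $\mathbf 1_V = \prod_{m \le j < m+\pi(p)} p_j$ and $V = s_{\widetilde G}(p_{[m,m+\pi(p))})$. Uniqueness of $V$ should be checked first: the idempotent $\mathbf 1_V$ appearing as the product over one full period is determined by $p$, and two distinct $\mathbf 1_U,\mathbf 1_W$ annihilate each other, so the sets $P^{(0)}(V)$ are pairwise disjoint. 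Consequently
$$
\Pi_n\bigl(P^{(0)}(M_D(G))\bigr) \;=\; \sum_{V \in \mathcal V} \#\{\,p : p \text{ neutral}, \ \pi(p) = n, \ p \in P^{(0)}(V)\,\},
$$
which by the definition of the zeta function already gives $\zeta_{P^{(0)}(M_D(G))} = \prod_{V\in\mathcal V} \zeta_{P^{(0)}(V)}$ once one knows the period-$n$ orbits are being counted correctly (points, not orbits — this matches the convention $\Pi_n$ = number of \emph{points} of period $n$, so no orbit/point bookkeeping subtlety arises beyond being consistent).

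Next I would identify, for a fixed $V$, the periodic points in $P^{(0)}(V)$ with the bi-infinite concatenations of words from the circular code $\mathcal C_V$. Given $p \in P^{(0)}(V)$ with $\mathbf 1_V = \prod_{m \le j < m+\pi(p)} p_j$, the word $w = p_{[m, m+\pi(p))}$ satisfies $s_{\widetilde G}(w) = t_{\widetilde G}(w) = V$ and $\prod_i w_i = \mathbf 1_V$; splitting $w$ at the first proper prefix whose product returns to $\mathbf 1_V$ and iterating, $w$ factors uniquely as a concatenation $c^{(1)} c^{(2)} \cdots c^{(r)}$ of elements of $\mathcal C_V$ (the defining conditions on $\mathcal C_V$ — return to $\mathbf 1_V$ at the end, never strictly before — are exactly the "first return" property that makes such a factorization unique). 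Conversely any periodic point obtained by bi-infinitely concatenating a primitive-in-the-code block of $\mathcal C_V$-words lies in $A(X)$: the relevant check is that such a point is neutral, which follows from the structure established around Theorem 2.1 and Lemma 2.2, since returning to the idempotent $\mathbf 1_V$ over a period is precisely the defining feature of $P^{(0)}(V)$ and the argument in Theorem 2.1 shows no "contracting subtree" obstruction occurs. This bijection, being a first-return coding by a circular code, is the standard setup in which the zeta function of the set of bi-infinite concatenations equals $1/(1-\varphi_V)$, where $\varphi_V$ is the generating function $\sum_{c \in \mathcal C_V} z^{|c|}$.

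The remaining step is the identity $\zeta_{P^{(0)}(V)} = \frac{1}{1 - \varphi_V}$, which I would derive from the circular-code structure exactly as in the classical computation for the zeta function of a shift coded by a circular (comma-free-type) code: the number of periodic points of period $n$ built from $\mathcal C_V$ equals the number of ways of writing $n$ as an ordered sum of code-word lengths around a cycle, and the generating-function identity $\exp\!\bigl(\sum_n \tfrac{\Pi_n}{n} z^n\bigr) = \frac{1}{1-\varphi_V}$ is the exponential form of $-\log(1-\varphi_V) = \sum_n \tfrac{1}{n}\bigl(\text{number of length-}n\text{ concatenation patterns}\bigr) z^n$. Multiplying over $V \in \mathcal V$ and using the disjointness from the first paragraph yields
$$
\zeta_{P^{(0)}(M_D(G))} \;=\; \prod_{V \in \mathcal V} \zeta_{P^{(0)}(V)} \;=\; \prod_{V \in \mathcal V} \frac{1}{1 - \varphi_V}.
$$
I expect the main obstacle to be the careful verification that the first-return factorization of a period-word into $\mathcal C_V$-words is well-defined and unique for \emph{every} $p \in P^{(0)}(V)$ — in particular that the minimal period $\pi(p)$ does not cause a code word to "wrap around" the seam in a way that breaks uniqueness — and dually that every $\mathcal C_V$-concatenation cycle actually produces a neutral point with the claimed vertex data; once the coding bijection is nailed down, the zeta-function identity is a formal manipulation.
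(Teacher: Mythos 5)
Your proposal is essentially the paper's own argument: the paper proves the corollary by combining Theorem 2.1 with the standard zeta-function identity for circular codes (citing \cite{P} and \cite{KM1}), which is precisely your vertex-by-vertex decomposition followed by first-return coding into $\mathcal C_V$ and the identity $\zeta_{P^{(0)}(V)} = 1/(1-\varphi_V)$. The checks you flag (well-definedness of the vertex $V$ for a neutral periodic point and uniqueness of the parsing into $\mathcal C_V$-words) are exactly the routine verifications implicit in that citation, so the approach matches.
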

\begin{proof}
The corollary follows from Theorem 2.1 (e.g. see \cite[Section 5]{P} or \cite [Section 2]{KM1}).
\end{proof}

\section{Families of finite directed graphs }

In this section we consider the case of
strongly connected finite  directed graphs
$G=G(\mathcal V, \mathcal E),$ such that $\card (\mathcal R_G) = 1$.
These graphs are precisely the graphs, that have a  Dyck inverse monoid as the  associated semigroup of their Markov-Dyck shifts.
Complete invariants for the isomorphism for these directed graphs are known for the case that all source vertices $s(e), e \in \mathcal M(M\negthinspace {\scriptstyle D}({G}))$,  have the same out-degree (see \cite {GM}).
We denote the root of $\mathcal F_G$ by $V_0$, and the out-degree of $V_0$ by $D(V_0)$.
 We set  
$$
\mathcal M_\ell(M\negthinspace {\scriptstyle D}({G})) = \{e \in \mathcal M(M\negthinspace {\scriptstyle D}({G})): \Lambda^{(e)}  = \ell  \}, \qquad 
\ell \in \Bbb N.
$$
By the use of the notation 
$\Lambda(M\negthinspace {\scriptstyle D}({G})))$ we indicate that all lengths 
$  \Lambda^{(e)}(M\negthinspace {\scriptstyle D}({G})), e \in  \mathcal E  \setminus \mathcal F_G,  $
are equal and that $\Lambda(M\negthinspace {\scriptstyle D}({G})))$  is their common value.

\subsection{A family of finite directed graphs I}
Set
$$
\Pi _I= \{(S_\ell  )_{\ell \in \Bbb N}\in \Bbb Z_+^\Bbb N:  1<
 \sum_{\ell \in \Bbb N}S_\ell < \infty \}.
$$
The data $(S_\ell  )_{\ell \in \Bbb N}  \in \Pi_I$ determine  canonical models 
$G((S_\ell  )_{\ell \in \Bbb N})$ of the graphs in $\bold{F}_{I}$. We define
$G((S_\ell  )_{\ell \in \Bbb N})$ as the graph with vertices $V_0$ and 
$$
V_{\ell, s, t}, \qquad 1\leq t < \ell, 1 \leq s \leq S_\ell, \ \ell >1,
$$
and edges
$$
f_{\ell, s, t}, \qquad 1\leq t < \ell, 1 \leq s \leq S_\ell, \ \ell >1,
$$
 and
 \begin{align*}
e_{\ell, s}, \qquad  \  \  \ 1 \leq s \leq S_\ell, \quad \ell\in \Bbb N.
 \end{align*}
 The source and target mappings are given by
 \begin{align*}
&s(f_{\ell, s,  1}   ) = V_0,
\\
&s(f_{\ell, s,  t}   ) = V_{\ell, s,  t-1} , \qquad 1 < t <\ell,
\\
&t(f_{\ell, s, t}  )= V_{\ell, s, t}, \qquad  \ \ \ 1\leq t < \ell,\quad 0< s \leq S_\ell, \ \ell >1,
 \end{align*}
and
$$
s(e_{1,s}   )= V_{0} = t(e_{1,s}   ), \qquad     0< s \leq S_1, 
$$
\begin{align*}
s(e_{\ell, s}) &=  V_{\ell,s,\ell -1},  t(e_{\ell, s})= V_0, \qquad  0< s \leq S_\ell, \ \ell >1.
\end{align*}
One has
$$
\mathcal F_{G((S_\ell  )_{\ell \in \Bbb N})} = \{f_{\ell, s, t}: 1\leq t < \ell, 1 \leq s \leq S_\ell, \ \ell >1\}.
$$
The Dyck shifts $D_N, N> 1,$ belong here with the data $S_1= N, S_\ell = 0, \ell > 1$.
Also the Fibonacci-Dyck shift belongs here with the data $S_1 = 1, S_2 = 1, S_\ell = 0, \ell > 2 $.  
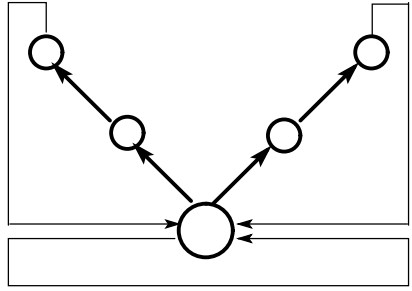
\begin{figure}[htbp]
\begin{center}
{
\unitlength 0.1in
\begin{picture}( 20.7200, 14.7700)( 20.5000,-24.6700)
%
\special{pn 20}%
\special{ar 3072 2180 140 140  0.0000000 6.2831853}%
%
\special{pn 20}%
\special{pa 3072 2040}%
\special{pa 3072 2040}%
\special{fp}%
%
\special{pn 20}%
\special{pa 3110 2040}%
\special{pa 3390 1760}%
\special{fp}%
\special{sh 1}%
\special{pa 3390 1760}%
\special{pa 3330 1794}%
\special{pa 3352 1798}%
\special{pa 3358 1822}%
\special{pa 3390 1760}%
\special{fp}%
%
\special{pn 8}%
\special{pa 4116 2146}%
\special{pa 3240 2146}%
\special{pa 3240 2146}%
\special{fp}%
%
\special{pn 8}%
\special{pa 3262 2146}%
\special{pa 3240 2146}%
\special{fp}%
\special{sh 1}%
\special{pa 3240 2146}%
\special{pa 3308 2166}%
\special{pa 3294 2146}%
\special{pa 3308 2126}%
\special{pa 3240 2146}%
\special{fp}%
%
\special{pn 20}%
\special{pa 2996 2026}%
\special{pa 2716 1746}%
\special{fp}%
\special{sh 1}%
\special{pa 2716 1746}%
\special{pa 2748 1808}%
\special{pa 2754 1784}%
\special{pa 2776 1780}%
\special{pa 2716 1746}%
\special{fp}%
%
\special{pn 20}%
\special{ar 3478 1684 84 84  0.0000000 6.2831853}%
%
\special{pn 20}%
\special{ar 2666 1670 84 84  0.0000000 6.2831853}%
%
\special{pn 20}%
\special{pa 3562 1606}%
\special{pa 3842 1326}%
\special{fp}%
\special{sh 1}%
\special{pa 3842 1326}%
\special{pa 3782 1360}%
\special{pa 3804 1364}%
\special{pa 3810 1388}%
\special{pa 3842 1326}%
\special{fp}%
%
\special{pn 20}%
\special{ar 3930 1250 84 84  0.0000000 6.2831853}%
%
\special{pn 20}%
\special{pa 2576 1606}%
\special{pa 2296 1326}%
\special{fp}%
\special{sh 1}%
\special{pa 2296 1326}%
\special{pa 2328 1388}%
\special{pa 2334 1364}%
\special{pa 2356 1360}%
\special{pa 2296 1326}%
\special{fp}%
%
\special{pn 20}%
\special{ar 2246 1250 84 84  0.0000000 6.2831853}%
%
\special{pn 8}%
\special{pa 4122 990}%
\special{pa 4122 2152}%
\special{fp}%
%
\special{pn 8}%
\special{pa 3934 1158}%
\special{pa 3934 998}%
\special{pa 4122 998}%
\special{pa 4122 998}%
\special{pa 4122 998}%
\special{fp}%
%
\special{pn 8}%
\special{pa 2058 2146}%
\special{pa 2932 2146}%
\special{pa 2932 2146}%
\special{fp}%
%
\special{pn 8}%
\special{pa 2910 2146}%
\special{pa 2932 2146}%
\special{fp}%
\special{sh 1}%
\special{pa 2932 2146}%
\special{pa 2866 2126}%
\special{pa 2880 2146}%
\special{pa 2866 2166}%
\special{pa 2932 2146}%
\special{fp}%
%
\special{pn 8}%
\special{pa 2050 990}%
\special{pa 2050 2152}%
\special{fp}%
%
\special{pn 8}%
\special{pa 2246 1144}%
\special{pa 2246 990}%
\special{pa 2050 990}%
\special{pa 2050 990}%
\special{pa 2050 990}%
\special{fp}%
%
\special{pn 8}%
\special{pa 2912 2222}%
\special{pa 2050 2222}%
\special{pa 2050 2468}%
\special{pa 4122 2468}%
\special{pa 4122 2222}%
\special{pa 4122 2222}%
\special{pa 4122 2222}%
\special{fp}%
%
\special{pn 8}%
\special{pa 4122 2222}%
\special{pa 3254 2222}%
\special{pa 3240 2222}%
\special{pa 3240 2222}%
\special{pa 3240 2222}%
\special{fp}%
%
\special{pn 8}%
\special{pa 3268 2222}%
\special{pa 3240 2222}%
\special{fp}%
\special{sh 1}%
\special{pa 3240 2222}%
\special{pa 3308 2242}%
\special{pa 3294 2222}%
\special{pa 3308 2202}%
\special{pa 3240 2222}%
\special{fp}%
\end{picture}%
}
\end{center}
\caption{$G(1,0,2,0,\dots)$}
\end{figure}

\begin{theorem}
For a finite directed graph $G=G(\mathcal V, \mathcal E)$ there exist data
$$
(S_\ell  )_{\ell \in \Bbb N} \in \Pi_I,
$$
such that there is a topological conjugacy 
\begin{align*}
M\negthinspace {\scriptstyle D}({G}) \simeq M\negthinspace {\scriptstyle D}(G((S_\ell  )_{\ell \in \Bbb N})), \tag {3.I.1}
\end{align*}
if and only if the associated semigroup of $M\negthinspace {\scriptstyle D}({G})$ is a Dyck inverse monoid, 
and
\begin{align*}
\tfrac {1}{2}I^{(0)}_2(M\negthinspace {\scriptstyle D}({G})) =
 \nu (M\negthinspace {\scriptstyle D}({G})) + \sum_{\ell >1}(\ell - 1)
 \card( \mathcal M_\ell(M\negthinspace {\scriptstyle D}({G})), 
\end{align*}
and in this case (3.I.1) holds for
\begin{align*}
S_\ell = \card(\mathcal M_\ell(M\negthinspace {\scriptstyle D}({G})   ), \qquad \ell \in \Bbb N. \tag {3.I.2}
\end{align*}
\end{theorem}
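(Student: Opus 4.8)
The plan is to establish three facts about an arbitrary strongly connected finite non-cyclic directed graph $G(\mathcal V,\mathcal E)$ and then read off the theorem from them. (i) $\tfrac12 I^{(0)}(M_D(G))=\card(\mathcal E)$: by Theorem 2.1 a period-$2$ point $p$ of $M_D(G)$ is neutral precisely when $\bold 1_V=p_mp_{m+1}$ and $V=s_{\widetilde G}(p_m)$ for some $m$, and the defining relations of $\mathcal S(G)$ show that a product of two letters of $\mathcal E^-\cup\mathcal E^+$ equals an idempotent $\bold 1_V$ only in the form $e^-e^+$ for a single edge $e$, whence $p=(\dots e^+e^-e^+e^-\dots)$; conversely every such word is admissible and, again by Theorem 2.1, neutral, so these points form $\card(\mathcal E)$ two-point orbits. (ii) for $e\in\mathcal E\setminus\mathcal F_G$, $\Lambda^{(e)}(M_D(G))=\card(b(s_G(e)))+1$: the bi-infinite repetition of the word $b(s_G(e))^-e^-$ realizes this period with $\lambda$-value $\widehat e^-$ over one period, while any period carrying $(\widehat e^-)^M$ contains tree-edge letters that describe a walk in the contracting subtree from the root $t_G(e)$ down to $s_G(e)$, of length at least $\card(b(s_G(e)))$, so at the minimum $M=1$ and the period equals $\card(b(s_G(e)))+1$. (iii) $\mathcal S(M_D(G))$ is a Dyck inverse monoid if and only if $\widehat G$ is a single vertex, equivalently $\card(\mathcal R_G)=1$, by $\mathcal S(M_D(G))=\mathcal S(\widehat G)$ of \cite{HK}.

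To prove necessity I would assume the topological conjugacy (3.I.1). The associated semigroup is a topological conjugacy invariant \cite{Kr2}, and $\mathcal S(M_D(G((S_\ell)_{\ell\in\Bbb N})))=\mathcal S(\widehat{G((S_\ell)_{\ell\in\Bbb N})})$ is the Dyck inverse monoid $\mathcal D_{\sum_\ell S_\ell}$, so $\mathcal S(M_D(G))$ is a Dyck inverse monoid; this is the first condition. Moreover $I^{(0)}$ is a conjugacy invariant since a conjugacy preserves periods and carries $A(M_D(G))$ onto $A(M_D(G((S_\ell)_{\ell\in\Bbb N})))$, while $\nu$ and the sets $\mathcal M_\ell$ are invariants by \cite[Proposition 4.2]{HIK} together with the invariance of the lengths $\Lambda^{(e)}$. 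For the canonical model Facts (i) and (ii) give $\card(\mathcal E)=\sum_\ell\ell S_\ell$ and $\card(\mathcal M_\ell(M_D(G((S_\ell)_{\ell\in\Bbb N}))))=S_\ell$, so both sides of (3.I.2) equal $\sum_\ell\ell S_\ell$ there; transporting (3.I.2) along the conjugacy gives the second condition, and the invariance of the $\mathcal M_\ell$ gives $S_\ell=\card(\mathcal M_\ell(M_D(G)))$, the final clause.

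For sufficiency I would assume that $\mathcal S(M_D(G))$ is a Dyck inverse monoid and that (3.I.2) holds. By (iii), $\mathcal R_G=\{V_0\}$, so all of $\mathcal V$ lies in a single (possibly trivial) contracting subtree. By (i) and $\Lambda^{(e)}-1=\card(b(s_G(e)))$ from (ii), identity (3.I.2) becomes
$$
\card(\mathcal F_G)=\sum_{e\in\mathcal E\setminus\mathcal F_G}\card(b(s_G(e)))=\sum_{f\in\mathcal F_G}\card\{e\in\mathcal E\setminus\mathcal F_G: f\text{ lies on }b(s_G(e))\}.
$$
By strong connectedness, from $t_G(f)$ one can leave the contracting subtree only through an edge of $\mathcal E\setminus\mathcal F_G$ whose source lies on or below $t_G(f)$; hence each summand on the right is at least $1$, and the equality forces each to be exactly $1$. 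Consequently the contracting subtree has no vertex of out-degree exceeding one apart from possibly $V_0$, every leaf is the source of exactly one edge of $\mathcal E\setminus\mathcal F_G$, and that edge returns to $V_0$; so $G$ is the vertex $V_0$ carrying $\card(\mathcal M_1(M_D(G)))$ loops together with a disjoint family of directed paths out of $V_0$, each ending in a leaf from which a single edge returns to $V_0$, and by (ii) the number of such loops through $V_0$ of length $\ell$ equals $\card(\mathcal M_\ell(M_D(G)))$ for every $\ell\in\Bbb N$. This is precisely the description of $G((S_\ell)_{\ell\in\Bbb N})$ with $S_\ell=\card(\mathcal M_\ell(M_D(G)))$, so $G$ is isomorphic to that graph, (3.I.1) holds with these $S_\ell$, and $\sum_\ell S_\ell=\nu(M_D(G))>1$ places $(S_\ell)_{\ell\in\Bbb N}$ in $\Pi_I$.

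I expect the main obstacle to be the combinatorial step in the sufficiency direction: turning the single numerical equality (3.I.2) into the rigid conclusion that $G$ is a bouquet of loops through one vertex. This works only because Facts (i) and (ii) make $\card(\mathcal E)$ and the numbers $\card(b(s_G(e)))$ readable from the topological conjugacy data; once (3.I.2) is rewritten as $\card(\mathcal F_G)=\sum_e\card(b(s_G(e)))$, the ``each edge of the subtree lies below exactly one multiplier source'' dichotomy finishes it. A secondary point requiring care is the admissibility of the period-$2$ words of (i) and the minimal-period claim of (ii), both routine verifications inside the graph inverse semigroup $\mathcal S(G)$.
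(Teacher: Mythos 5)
Your proposal is correct and follows essentially the same route as the paper: both reduce condition (3.I.2), via $\tfrac12 I^{(0)}_2=\card(\mathcal E)$ and $\Lambda^{(e)}=\card(b(s_G(e)))+1$, to the equality case of the counting inequality $\card(\mathcal F_G)\leq\sum_{\ell>1}(\ell-1)\card(\mathcal M_\ell(M_D(G)))$, whose equality case characterizes membership in $\bold F_I$ and yields $S_\ell=\card(\mathcal M_\ell(M_D(G)))$. Your double counting of tree edges against multiplier sources (with the strong-connectedness step making each count at least one) is just a more explicit version of the paper's two-step inequality through the leaf counts $J_{\widetilde G}(\ell)$, and your necessity bookkeeping via the invariance of $I^{(0)}$, $\nu$, and the $\mathcal M_\ell$ matches what the paper leaves implicit.
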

\begin{proof}
The statement holds if $G$ is a single vertex graph. Consider a graph $\widetilde{G} = G(\widetilde{\mathcal V}, \widetilde{\mathcal E})$,
such that the graph $G( \widetilde{\mathcal V} , \mathcal F_{\widetilde{G}}  )$ is a non-degenerate tree. 
Denote by $J_{\widetilde{G}}(\ell)$ the number of leafs of 
$\mathcal F_{\widetilde{G}}$ that have level $\ell -1$. One has that
\begin{align*}
\card (\mathcal F_{\widetilde{G}} ) \leq \sum_{\ell >1}(\ell - 1) J_{\widetilde{G}}(\ell) , \tag {3.I.3}
\end{align*}
and
\begin{align*}
J_{\widetilde{G}}(\ell)\leq \card (\mathcal M_\ell(M\negthinspace {\scriptstyle D}(\widetilde{G}))). \qquad \ell > 1,
\tag {3.I.4}
\end{align*}
Therefore
\begin{align*}
\card (\mathcal F_{\widetilde{G}} ) \leq \sum_{\ell >1}(\ell - 1)
 \card (\mathcal M_\ell(M\negthinspace {\scriptstyle D}(\widetilde{G}))) . \tag {3.I.5}
\end{align*}
Equality holds simultaneously in  
(3.I.3) and (3.I.4) 
if and only if it holds in (3.I.5)
if and only if  $G$ belongs to $\bold{F}_{I}$. Equation (3.I.2) implies equality in (3.I.5)
 and the theorem follows.
\end{proof}

\begin{corollary}
For finite directed graphs $G=G(\mathcal V, \mathcal E)$ such that
the associated semigroup of $M\negthinspace {\scriptstyle D}({G}) $ is a Dyck inverse monoid, and such that
\begin{align*}
\tfrac {1}{2}I^{(0)}(M\negthinspace {\scriptstyle D}&({G}) ) =
 \nu (M\negthinspace {\scriptstyle D}({G}) ) + \sum_{\ell >1}(\ell - 1)
 \card( \mathcal M_\ell(M\negthinspace {\scriptstyle D}({G}) ))
\end{align*}
 the topological conjugacy of their Markov-Dyck shifts implies the isomorphism of the graphs.
\end{corollary}
\begin{proof}
In (3.I.2) the data $ (S_\ell  )_{\ell \in \Bbb N}$ are expressed in terms of invariants of topological conjugacy.
\end{proof}

\subsection{A family of finite directed graphs II}

We set
$$
\Pi = \{(R, (Q_M)_{M \in \Bbb N}) \in \Bbb Z_+\times \Bbb  N^{\Bbb N}:
 1 <R +  \sum_{M\in\Bbb N }Q_M < \infty \}.
$$
The data $(R, (Q_M)_{M \in \Bbb N}) \in \Pi$ determine  canonical models 
$G((R, (Q_M)_{M \in \Bbb N}))$ of the graphs in $\bold{F}_{II}$. We define
$G((R, (Q_M)_{M \in \Bbb N}))$ 
as the directed graph with a vertex $V(0)$ and with vertices
$$
V_{M, q}(1), \qquad 1\leq q\leq Q_M, \quad M \in \Bbb N, 
$$
and with edges
$$
f_{M, q}, \qquad 1\leq q\leq Q_M, \quad M \in \Bbb N, 
$$
and
$$
e_r, \qquad 1 \leq r \leq R,
$$
and
$$
e_{M, q,m}, \qquad 1\leq m\leq M, \ 1\leq q\leq Q_M, \quad M \in \Bbb N.
$$
The source and target mappings are given by
$$
s(f_{M, q}   ) = V(0),
\ \
t(f_{M, q}   ) = V_{M, q}(1),\qquad 1\leq q\leq Q_M, \quad M \in \Bbb N,
$$
and
$$
s(e_r) =  t(e_r)  = V(0),  \quad 1 \leq r \leq R,
$$
and
$$
s(e_{M, q,m} ) = V_{M, q}, \ \
t(e_{M, q,m}) =  V(0),\qquad 1\leq m\leq M,1\leq q\leq Q_M, \quad M \in \Bbb N.
$$

One has
 $$
 \mathcal F_{G(  R, (Q_M)_{M \in \Bbb N})} = \{f_{M, q}, 1\leq q\leq Q_M,  M \in \Bbb N\}.
 $$ 
 Note the non-empty intersection of $\bold{F}_{II}$ with $\bold{F}_{I}$.
\begin{figure}[htbp]
\begin{center}
{
\unitlength 0.1in
\begin{picture}( 19.7400, 18.4100)( 19.8000,-29.1100)
%
\special{pn 20}%
\special{ar 3044 2772 140 140  0.0000000 6.2831853}%
%
\special{pn 20}%
\special{pa 3044 2632}%
\special{pa 3044 2632}%
\special{fp}%
%
\special{pn 20}%
\special{ar 3500 1770 84 84  0.0000000 6.2831853}%
%
\special{pn 20}%
\special{ar 3184 1770 84 84  0.0000000 6.2831853}%
%
\special{pn 20}%
\special{ar 2884 1778 84 84  0.0000000 6.2831853}%
%
\special{pn 8}%
\special{pa 2344 1070}%
\special{pa 2344 1070}%
\special{fp}%
%
\special{pn 20}%
\special{pa 2968 2624}%
\special{pa 2898 1890}%
\special{fp}%
\special{sh 1}%
\special{pa 2898 1890}%
\special{pa 2884 1958}%
\special{pa 2902 1942}%
\special{pa 2924 1954}%
\special{pa 2898 1890}%
\special{fp}%
%
\special{pn 20}%
\special{pa 3100 2624}%
\special{pa 3190 1892}%
\special{fp}%
\special{sh 1}%
\special{pa 3190 1892}%
\special{pa 3162 1956}%
\special{pa 3184 1944}%
\special{pa 3202 1960}%
\special{pa 3190 1892}%
\special{fp}%
%
\special{pn 20}%
\special{pa 3150 2646}%
\special{pa 3432 1868}%
\special{fp}%
\special{sh 1}%
\special{pa 3432 1868}%
\special{pa 3390 1924}%
\special{pa 3414 1918}%
\special{pa 3428 1938}%
\special{pa 3432 1868}%
\special{fp}%
%
\special{pn 8}%
\special{pa 3506 1890}%
\special{pa 3198 2688}%
\special{pa 3198 2688}%
\special{fp}%
%
\special{pn 8}%
\special{pa 3208 2662}%
\special{pa 3198 2688}%
\special{fp}%
\special{sh 1}%
\special{pa 3198 2688}%
\special{pa 3242 2632}%
\special{pa 3218 2638}%
\special{pa 3204 2618}%
\special{pa 3198 2688}%
\special{fp}%
%
\special{pn 8}%
\special{pa 3220 1686}%
\special{pa 3220 1582}%
\special{pa 3710 1582}%
\special{pa 3710 2736}%
\special{pa 3710 2736}%
\special{pa 3710 2736}%
\special{fp}%
%
\special{pn 8}%
\special{pa 3710 2736}%
\special{pa 3234 2736}%
\special{pa 3226 2736}%
\special{pa 3226 2736}%
\special{fp}%
%
\special{pn 8}%
\special{pa 3254 2736}%
\special{pa 3226 2736}%
\special{fp}%
\special{sh 1}%
\special{pa 3226 2736}%
\special{pa 3294 2756}%
\special{pa 3280 2736}%
\special{pa 3294 2716}%
\special{pa 3226 2736}%
\special{fp}%
%
\special{pn 8}%
\special{pa 3184 1672}%
\special{pa 3184 1442}%
\special{pa 3814 1442}%
\special{pa 3814 2806}%
\special{pa 3814 2806}%
\special{pa 3814 2806}%
\special{fp}%
%
\special{pn 8}%
\special{pa 3814 2806}%
\special{pa 3234 2806}%
\special{pa 3234 2806}%
\special{pa 3234 2806}%
\special{fp}%
%
\special{pn 8}%
\special{pa 3262 2806}%
\special{pa 3234 2806}%
\special{fp}%
\special{sh 1}%
\special{pa 3234 2806}%
\special{pa 3300 2826}%
\special{pa 3286 2806}%
\special{pa 3300 2786}%
\special{pa 3234 2806}%
\special{fp}%
%
\special{pn 8}%
\special{pa 3150 1680}%
\special{pa 3150 1266}%
\special{pa 3954 1266}%
\special{pa 3954 2876}%
\special{pa 3954 2876}%
\special{pa 3954 2876}%
\special{fp}%
%
\special{pn 8}%
\special{pa 3954 2876}%
\special{pa 3234 2876}%
\special{pa 3234 2876}%
\special{pa 3234 2876}%
\special{fp}%
%
\special{pn 8}%
\special{pa 3262 2876}%
\special{pa 3234 2876}%
\special{fp}%
\special{sh 1}%
\special{pa 3234 2876}%
\special{pa 3300 2896}%
\special{pa 3286 2876}%
\special{pa 3300 2856}%
\special{pa 3234 2876}%
\special{fp}%
%
\special{pn 8}%
\special{pa 2912 1680}%
\special{pa 2912 1268}%
\special{pa 1980 1268}%
\special{pa 1980 2874}%
\special{pa 1980 2874}%
\special{pa 1980 2874}%
\special{fp}%
%
\special{pn 8}%
\special{pa 1980 2874}%
\special{pa 2904 2874}%
\special{pa 2904 2874}%
\special{pa 2904 2874}%
\special{fp}%
%
\special{pn 8}%
\special{pa 2868 2874}%
\special{pa 2904 2874}%
\special{fp}%
\special{sh 1}%
\special{pa 2904 2874}%
\special{pa 2838 2854}%
\special{pa 2852 2874}%
\special{pa 2838 2894}%
\special{pa 2904 2874}%
\special{fp}%
%
\special{pn 8}%
\special{pa 2134 2804}%
\special{pa 2904 2804}%
\special{pa 2904 2804}%
\special{pa 2904 2804}%
\special{fp}%
%
\special{pn 8}%
\special{pa 2868 2804}%
\special{pa 2904 2804}%
\special{fp}%
\special{sh 1}%
\special{pa 2904 2804}%
\special{pa 2838 2784}%
\special{pa 2852 2804}%
\special{pa 2838 2824}%
\special{pa 2904 2804}%
\special{fp}%
%
\special{pn 8}%
\special{pa 2842 1686}%
\special{pa 2842 1582}%
\special{pa 2352 1582}%
\special{pa 2352 2736}%
\special{pa 2352 2736}%
\special{pa 2352 2736}%
\special{fp}%
%
\special{pn 8}%
\special{pa 2344 2736}%
\special{pa 2890 2736}%
\special{pa 2890 2736}%
\special{pa 2890 2736}%
\special{fp}%
%
\special{pn 8}%
\special{pa 2862 2736}%
\special{pa 2890 2736}%
\special{fp}%
\special{sh 1}%
\special{pa 2890 2736}%
\special{pa 2824 2716}%
\special{pa 2838 2736}%
\special{pa 2824 2756}%
\special{pa 2890 2736}%
\special{fp}%
%
\special{pn 8}%
\special{pa 2876 1680}%
\special{pa 2870 1442}%
\special{pa 2134 1442}%
\special{pa 2134 2806}%
\special{pa 2134 2806}%
\special{pa 2134 2806}%
\special{fp}%
%
\special{pn 8}%
\special{pa 2940 2666}%
\special{pa 2694 2666}%
\special{pa 2694 1756}%
\special{pa 2484 1756}%
\special{pa 2484 2702}%
\special{pa 2484 2694}%
\special{pa 2484 2694}%
\special{fp}%
%
\special{pn 8}%
\special{pa 2484 2710}%
\special{pa 2890 2710}%
\special{pa 2890 2710}%
\special{pa 2890 2710}%
\special{fp}%
%
\special{pn 8}%
\special{pa 2870 2710}%
\special{pa 2890 2710}%
\special{fp}%
\special{sh 1}%
\special{pa 2890 2710}%
\special{pa 2824 2690}%
\special{pa 2838 2710}%
\special{pa 2824 2730}%
\special{pa 2890 2710}%
\special{fp}%
\end{picture}%
}
\end{center}
\caption{$G(1,(1,0,3,0,\dots))$}
\end{figure}
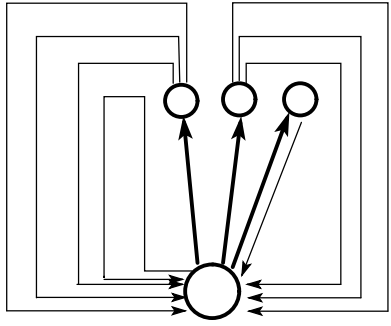
\begin{theorem}
For a finite directed graph $G=G(\mathcal V, \mathcal E)$ there exist data
$$  (R, (Q_M)_{M \in \Bbb N} )\in \Pi,$$ such that there is a topological conjugacy 
\begin{align*}
M\negthinspace {\scriptstyle D}({G})  \simeq M\negthinspace {\scriptstyle D}(G(  (R, (Q_M)_{M \in \Bbb N}  )), \tag {3.II.1}
\end{align*}
if and only if the associated semigroup of $M\negthinspace {\scriptstyle D}({G})$ is a Dyck inverse monoid, and 
\begin{align*}
\Lambda^{(e)} \leq 2, \qquad e \in \mathcal E \setminus \mathcal F _G, \tag {3.II.2}
\end{align*}
and in this case (3.1) holds for
\begin{align*}
R =& \card( \{ e \in  \mathcal E \setminus \mathcal F _G: \Lambda^{(e)} = 1\}), \tag {3.II.3}
\\
Q_M& =\tfrac{1}{M} \card (\{  e \in  \mathcal E \setminus \mathcal F _G: \Xi_4^{(e)}=
\\
&M+
I_2^{(0)}(M\negthinspace {\scriptstyle D}({G}) ) - \nu(M\negthinspace {\scriptstyle D}({G}) ) + \card(\mathcal M_1(G) ) \}), \quad M \in \Bbb N.
\end{align*}
\end{theorem}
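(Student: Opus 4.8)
The plan is to prove the two implications separately and then to read the data $(R,(Q_M)_{M\in\Bbb N})$ off topological-conjugacy invariants.

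For the ``only if'' direction it suffices to recall two invariance facts. By \cite{HK} the associated semigroup $\mathcal S(M_D(G))$ is an invariant of topological conjugacy, and for a canonical model $G((R,(Q_M)_{M\in\Bbb N}))$ the contracted graph $\widehat G$ has a single vertex, so that semigroup is a Dyck inverse monoid. Also the correspondence that assigns to a negative multiplier the set of periodic points carrying it, and hence each number $\Lambda^{(e)}$, is an invariant of topological conjugacy \cite[Proposition 4.2]{HIK}; and since all leaves of the contracting subtree of a canonical model have level one, every $e\in\mathcal E\setminus\mathcal F_G$ is a loop at $V(0)$ with $\Lambda^{(e)}=1$, or emanates from a level-one leaf with $\Lambda^{(e)}=2$. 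Hence a topological conjugacy $M_D(G)\simeq M_D(G((R,(Q_M)_{M\in\Bbb N})))$ forces the associated semigroup of $M_D(G)$ to be a Dyck inverse monoid and $\Lambda^{(e)}\le 2$ for all $e\in\mathcal E\setminus\mathcal F_G$.

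For the ``if'' direction, assume the associated semigroup of $M_D(G)$ is a Dyck inverse monoid and $\Lambda^{(e)}\le 2$ for every $e\in\mathcal E\setminus\mathcal F_G$. By $\mathcal S(M_D(G))=\mathcal S(\widehat G)$ the first hypothesis means $\widehat G$ has a single vertex, i.e.\ $\mathcal R_G=\{V_0\}$ and $G$ has a single vertex or a single contracting subtree with root $V_0$; in particular $t_G(e)=V_0$ for each $e\in\mathcal E\setminus\mathcal F_G$. I would then check that $\Lambda^{(e)}$ equals one more than the level of $s_G(e)$: the word $b(s_G(e))^-e^-$, running down the $\mathcal F_G$-path from $V_0$ to $s_G(e)$ and then along $e$, has $\lambda$-image $\widehat e^-$ and yields a periodic point of that period, and minimality holds because any $\mathcal L(M_D(G))$-word with $\lambda$-image in $\mathcal S^-(\widehat G)$ must begin by descending into the tree. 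As every leaf of the contracting subtree is the source of some edge of $\mathcal E\setminus\mathcal F_G$ (all its outgoing edges being non-tree), the hypothesis $\Lambda^{(e)}\le 2$ forces the contracting subtree to have depth at most one. Then $G$ consists of $V_0$ carrying $R$ loops together with leaves $V_1,\dots,V_J$, each joined to $V_0$ by one $\mathcal F_G$-edge and carrying $M_j\ge 1$ edges back to $V_0$; thus $G$ is isomorphic to $G((R,(Q_M)_{M\in\Bbb N}))$ with $R$ the number of loops and $Q_M$ the number of leaves of out-degree $M$, so $G\in\bold{F}_{II}$ and, the isomorphism being an alphabet relabeling, (3.II.1) holds.

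It remains to express $R$ and the $Q_M$ through invariants. That $R=\card(\mathcal M_1(M_D(G)))$ is immediate, the edges with $\Lambda^{(e)}=1$ being exactly the loops. The substantive point — which I expect to be the main obstacle — is the evaluation of $\Xi_4^{(e)}(M_D(G))$ for an $e$ with $\Lambda^{(e)}=2$ emanating from a leaf $V$ of out-degree $M$ with $\mathcal F_G$-edge $f$ and out-edges $e=e_1,e_2,\dots,e_M$. An orbit of length $4$ with negative multiplier $e$ corresponds to a closed walk of length $4$ in $\widetilde G$ whose cyclic $\lambda$-image is a power of $\widehat e^-$; using the relation $f^-g^+=\delta_{f,g}\bold 1_{s_G(f)}$ one sees such a walk visits $V$ twice or once. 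The walks visiting $V$ twice give precisely $M+1$ orbits — those of $f^-e^-f^-f^+$ and of $e_k^+\,e^-\,f^-\,e_k^-$, $1\le k\le M$ — and the walks visiting $V$ once give one orbit, that of $f^-e^-\gamma_1\gamma_2$, for each loop $e_r$ at $V_0$ (taking $\gamma_1\gamma_2=e_r^-e_r^+$) and for each leaf $W\ne V$ (taking $\gamma_1\gamma_2=f_W^-f_W^+$), so $\Xi_4^{(e)}(M_D(G))=M+\card(\mathcal F_G)+\card(\mathcal M_1)$; the same bookkeeping for a loop $e$ yields only the constant $\card(\mathcal F_G)+\card(\mathcal M_1)$. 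Rewriting this constant by means of $\nu(M_D(G))=\card(\mathcal E\setminus\mathcal F_G)$ and of $I_2^{(0)}(M_D(G))$ (whose period-two neutral orbits are the ``bubbles'' $g^\mp g^\pm$, one per edge of $G$) puts $\Xi_4^{(e)}$ in the form of (3.II.3); since these values are distinct for distinct $M$ and strictly above the loop value, the set $\{e\in\mathcal E\setminus\mathcal F_G:\Xi_4^{(e)}=M+\dots\}$ has exactly $MQ_M$ elements, which gives $Q_M$ on dividing by $M$. The genuine work is the admissibility analysis ruling out vanishing subproducts and the counting of necklaces up to rotation without repetition; the case distinctions become routine once the ``$V$ twice / $V$ once'' dichotomy is in place.
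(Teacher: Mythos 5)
Your proposal is correct and follows essentially the same route as the paper: deduce from the Dyck-inverse-monoid hypothesis and $\Lambda^{(e)}\leq 2$ that $G$ has a single root whose contracting subtree has depth at most one, then recover $R$ from $\mathcal M_1$ and $Q_M$ by counting the length-four orbits with a given negative multiplier as insertions into the minimal orbit, expressing $D(V_0)$ through $I_2^{(0)}$, $\nu$ and $\card(\mathcal M_1)$. If anything, your enumeration is more careful than the paper's sketch: you also include the insertions $f_W^-f_W^+$ at the root for leaves $W\neq s_G(e)$ and check that loops yield only the smaller value $\card(\mathcal F_G)+\card(\mathcal M_1)$, so that no collision with the values $M+D(V_0)$ can corrupt the count of $Q_M$.
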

\begin{proof}
The assumption, that the associated semigroup  of $M\negthinspace {\scriptstyle D}({G}))$ is a Dyck inverse monoid, implies, that 
the graph $G( \mathcal V , \mathcal F ) $ is a (possibly degenerate) tree.
 In  case that
 the tree $G( \mathcal V , \mathcal F ) $ is not degenerate, 
 one has from (3.II.2) that all leaves of the  subtree are at level 1. Also, for an $ e \in  \mathcal E \setminus \mathcal F _G $ , 
one has that 
 $$
 \card (\Xi^{(e)}_4)=\card (\{e^\prime  \in  \mathcal E \setminus \mathcal F _G  : s(e^\prime  ) = s(e)  \})+D(V_0).
 $$
 This follows from the observation, that in this case every orbit of length 4 with negative multiplier $e\in \mathcal E \setminus \mathcal F_G$ is obtained by inserting into an orbit of length 2 with negative multiplier $e\in \mathcal E \setminus \mathcal F_G$ either the word 
$f^-f^+, f \in \mathcal F_G, t(f) = s(e),$ or a word 
$\widetilde e^-\widetilde e^+, \widetilde e\in\mathcal M_1(G),$ or a word 
$\widetilde e^-\widetilde e^+,  \widetilde e\in\mathcal M_2(G), s(\widetilde e) = s(e)$. 
It is
$$
D(V_0) = I_2^{(0)}(M\negthinspace {\scriptstyle D}({G}))  - \nu(M\negthinspace {\scriptstyle D}({G}) ) + \card(\mathcal M_1(G) ) ,
$$
and the equations (3.II.3) follow.
\end{proof}
\begin{corollary}
For finite directed graphs $G=G(\mathcal V, \mathcal E)$ such that
  the associated semigroup of $M\negthinspace {\scriptstyle D}({G})$ is a Dyck inverse monoid, and such that
\begin{align*}
\Lambda^{(e)} \leq 2, \qquad e \in \mathcal E \setminus \mathcal F _G, 
\end{align*}
 the topological conjugacy of their Markov-Dyck shifts implies the isomorphim of the graphs.
\end{corollary}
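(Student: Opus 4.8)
The plan is to argue exactly as for the family $\bold{F}_{I}$, deducing the statement from the preceding theorem together with the invariance, under topological conjugacy, of all the quantities occurring in the formulas (3.II.3).

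First I would observe that a graph $G$ subject to the two hypotheses of the corollary is isomorphic to the canonical model $G((R,(Q_M)_{M\in\Bbb N}))$ determined by the data (3.II.3). Indeed, by the hypotheses and the proof of the preceding theorem, $G$ is either a one-vertex graph or has a single contracting subtree all of whose leaves have level one; since the root $V_0$ is then the only vertex with more than one incoming edge, every edge issuing from a leaf must terminate at $V_0$. Hence the isomorphism type of $G$ is completely recorded by the number of loops at $V_0$ and, for each $M\in\Bbb N$, by the number of leaves of out-degree $M$; and by (3.II.3) these numbers are exactly $R$ and $Q_M$ respectively. This identifies $G$ with $G((R,(Q_M)_{M\in\Bbb N}))$.

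Next I would verify that each ingredient of (3.II.3) is a topological conjugacy invariant of $M_D(G)$. The cardinality $\nu(M_D(G))=\card(\mathcal M(M_D(G)))$ is such an invariant, and by \cite[Proposition 4.2]{HIK} so is the assignment to a multiplier $e$ of its set of periodic points; in particular the subsets $\mathcal M_\ell(M_D(G))$, the integers $\Lambda^{(e)}(M_D(G))$, and the partition of $\mathcal E\setminus\mathcal F_G$ according to the values of $\Xi^{(e)}_4(M_D(G))$ are invariants, while $I^{(0)}_2(M_D(G))$ is an invariant because neutrality and period are preserved by topological conjugacy. Consequently the datum $(R,(Q_M)_{M\in\Bbb N})\in\Pi$ assigned to $G$ by (3.II.3) depends only on the topological conjugacy class of $M_D(G)$.

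Combining the two observations, if $G_1$ and $G_2$ satisfy the hypotheses and $M_D(G_1)\simeq M_D(G_2)$, then the data (3.II.3) computed from the two shifts coincide, so
$$
G_1\cong G((R,(Q_M)_{M\in\Bbb N}))\cong G_2 ,
$$
which is the assertion. I expect the only genuinely delicate point to be the first paragraph, namely the passage from the topological conjugacy $M_D(G)\simeq M_D(G((R,(Q_M)_{M\in\Bbb N})))$ furnished by the theorem to an actual graph isomorphism $G\cong G((R,(Q_M)_{M\in\Bbb N}))$; everything after that is bookkeeping with invariants already established in this section.
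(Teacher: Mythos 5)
Your proof is correct and takes essentially the same route as the paper: the paper's own proof is just the one-line remark that in (3.II.3) the data $(R,(Q_M)_{M\in\Bbb N})$ are expressed in terms of invariants of topological conjugacy, with the identification of $G$ (one vertex, or one contracting subtree with all leaves at level one) with the canonical model determined by these data left implicit. Your first paragraph simply makes that implicit step explicit; the rest coincides with the paper's argument.
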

\begin{proof}
In (3.II.3) the data $  (R, (Q_M)_{M \in \Bbb N} )$ are expressed in terms of invariants of topological conjugacy.
\end{proof}

\subsection{A family of finite directed graphs III}

We consider the family $\bold{F}_{III}$ of finite directed graphs 
$$
G[\ell, M] = G(\mathcal V[\ell, M],\mathcal E[\ell, M]), \qquad \ell , M \in \Bbb N, M \in \Bbb N,
$$
such that
$G(\mathcal V[\ell, M], \mathcal F_{G[\ell, M]} )$ is a non-degenerate tree.
We describe the graphs $G[\ell, M],\ell , M \in \Bbb N$ as follows: For
 $\ell , M \in \Bbb N$ the graph $G[\ell, M]$ has a vertex $V(0)$, vertices
$$
V_{0}(l), V_{1}(l),\qquad 1\leq l< \ell, 
$$
and  edges
$$
f_{0}(l), f_{1}(l),\qquad 1\leq l< \ell,  
$$
and
$$
e_{0}(m),e_{1}(m), \qquad 1 \leq m \leq M.
$$
The source and target mappings are given by
$$
s( f_0(1)  ) =  s( f_1(1)  )   = V(0),
$$
 \begin{multline*}
s(f_0(l)) = V_0(l - 1), \ s(f_1(l)) = V_1(l - 1), \
t(f_0(l)) = V_0(l),  \ t(f_1(l)) = V_1(l),  \\ 1 \leq l < \ell,
 \end{multline*}
and
 \begin{multline*}
s(e_0(m)) =  V_0(\ell -1), s(e_1(m))= V_1(\ell -1) , \ \
t(e_0(m)) =t(e_1(m))= V(0) ,\\ 1\leq m \leq M.
 \end{multline*}

The tree $G(\mathcal V[\ell, M], \mathcal F_{G[\ell, M]} )$
has the root $V_0$ and one has
$$
\mathcal F_{G[\ell, M]} = \{ f_0(l),f_1(l) ,1\leq l<\ell  \}, \quad \ell , M \in \Bbb N, M \in \Bbb N.
$$
Note the non-empty intersection of $ \bold{F}_{III}$ with  $ \bold{F}_{I}$and  $ \bold{F}_{II}$.
\begin{lemma}
For $G=G[\ell, M]$ one has that
$$
\tfrac{1}{2}I^{(0)}_2(M\negthinspace {\scriptstyle D}({G}))=\nu(M_D(G)) + 2 \Lambda
 (M\negthinspace {\scriptstyle D}({G})) -2.
$$
\end{lemma}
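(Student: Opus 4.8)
The plan is to compute the two invariants on the left and right separately for the graph $G = G[\ell,M]$ and observe that they agree. All three quantities are determined by the combinatorial structure of $G$ and its canonical "V"-shaped contracting subtree, so the work is entirely bookkeeping in the graph inverse semigroup $\mathcal S(G)$.

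First I would identify $\Lambda(M_D(G))$. For a multiplier $e \in \{e_0(m), e_1(m): 1\le m\le M\}$, a periodic point of period $k$ with negative multiplier $e$ corresponds to a loop in $\widehat G$ realizing $(\widehat e^-)^{?}$; since $s_G(e)$ is a leaf of the contracting subtree at level $\ell-1$, the shortest return path $b(s_{\widetilde G}(e^-))$ through the tree to the root and back contributes length $2(\ell-1)$ on each side, and the edge $e$ itself contributes $2$. One checks that $\Lambda^{(e)} = 2\ell$ for every multiplier $e$ (the two branches of the V are symmetric, so all $\Lambda^{(e)}$ are equal, justifying the notation $\Lambda(M_D(G))$), hence $2\Lambda(M_D(G)) - 2 = 4\ell - 2$. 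Meanwhile $\nu(M_D(G)) = \card(\mathcal E\setminus\mathcal F_G) = 2M$.

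Next I would enumerate the neutral periodic points of period $2$, using Theorem 2.1: $P^{(0)}(M_D(G))$ is the union over vertices $V$ of $P^{(0)}(V)$, and a neutral point of period $2$ at $V$ is a word $c_1 c_2$ with $s_{\widetilde G}(c_1c_2) = t_{\widetilde G}(c_1c_2) = V$ and $c_1 c_2 = \bold 1_V$; equivalently $c_1 = g^-$, $c_2 = g^+$ for an incoming edge $g$ of $V$, counted as a bi-infinite point (so each unordered such pair gives one orbit of length $2$, i.e. two points). Summing the number of incoming edges over all vertices $V \in \mathcal V$ gives $\tfrac12 I^{(0)}_2(M_D(G)) = \sum_{V} (\text{indegree of } V \text{ in } \widetilde G \text{ via } \mathcal E^+)$... more carefully, one counts for each vertex $V$ the number of edges $g\in\mathcal E$ with $t_{\widetilde G}(g^-) = V$, which is just the indegree of $V$ in $G$. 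For $G[\ell,M]$: the root $V(0)$ has indegree $2M$ (the edges $e_0(m), e_1(m)$) plus $0$ from the tree; wait — $V(0)$ also receives $f_0(1)$? No: $f_0(1)$ has source $V(0)$. So indegree of $V(0)$ is $2M$. Each vertex $V_i(l)$ for $1 \le l < \ell$ has a unique incoming tree edge, contributing $1$ each, for $2(\ell - 1)$ vertices. Thus $\tfrac12 I^{(0)}_2(M_D(G)) = 2M + 2(\ell-1) = 2M + 2\ell - 2 = \nu(M_D(G)) + 2\Lambda(M_D(G)) - 2$, which is the claim.

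The main obstacle I anticipate is the bookkeeping in the count of period-$2$ neutral points: one must be careful that every period-$2$ neutral point indeed arises from an incoming edge $g$ at some vertex via the pair $g^- g^+$ (this is where Theorem 2.1 and the circular-code description $\mathcal C_V$ before Corollary 2.5 are used, with length-$2$ words in $\mathcal C_V$ being exactly the $g^-g^+$), and that the leaves at level $\ell-1$ do not secretly admit shorter returns that would lower $\Lambda$. Once the identification $\Lambda^{(e)} = 2\ell$ for all $e$ and the indegree count are pinned down, the equality is immediate arithmetic; no delicate estimates are needed.
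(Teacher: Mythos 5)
Your counts of $\nu(M_D(G))=2M$ and of the neutral period-$2$ points are fine: every neutral point of period $2$ is of the form $(g^-g^+)^\infty$ (up to shift) for an edge $g\in\mathcal E$, since a product of two generators equals some $\bold 1_V$ only when it is $g^-g^+$, so $\tfrac12 I^{(0)}_2(M_D(G))=\card(\mathcal E)=2M+2(\ell-1)$. This is essentially the paper's (very terse) argument. The genuine error is your value of $\Lambda$. You claim $\Lambda^{(e)}=2\ell$ on the grounds that a periodic point with negative multiplier $e$ must travel down the tree and back, but no positive edges are needed at all: the infinite concatenation of the negative edges along the directed cycle of $G$ through $e$, namely $\bigl(f_0(1)^-\cdots f_0(\ell-1)^-\,e_0(m)^-\bigr)^\infty$, is admissible (a product of $-$ generators along a directed path is never $0$), has period $\ell$, and its $\lambda$-image over one period is $\widehat{\bold 1}^{\,\ell-1}\widehat e^- =\widehat e^-$, so it has negative multiplier $e$. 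Conversely any point with negative multiplier $e$ must contain the symbol $e^-$ in each period and must traverse $\widetilde G$ from $V(0)$ back to the leaf $V_i(\ell-1)$, which takes at least $\ell-1$ further symbols, so $\Lambda^{(e)}=\ell$, not $2\ell$.

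With your claimed value the identity you are proving would read $2M+2\ell-2=2M+4\ell-2$, which is false; your final line silently substitutes $\Lambda=\ell$ (writing $2M+2\ell-2=\nu+2\Lambda-2$), so the write-up is internally inconsistent even though the target equality is true. The fix is simply to establish $\Lambda(M_D(G))=\ell$ as above, after which $\nu+2\Lambda-2=2M+2(\ell-1)=\card(\mathcal E)=\tfrac12 I^{(0)}_2(M_D(G))$, i.e. $2\Lambda-2=\card(\mathcal F_G)$, which is exactly the observation underlying the paper's proof (the displayed equation there has an evident typo, $\mathcal E\setminus\mathcal F_G$ in place of $\mathcal F_G$).
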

\begin{proof}

One has that
$$
 \card( \mathcal F_{G[\ell, M]})  = 2\Lambda (M\negthinspace {\scriptstyle D}({G[\ell, M]}))-2, \quad \ell , M \in \Bbb N, M \in \Bbb N. \qed
$$
\renewcommand{\qedsymbol}{}
\end{proof}
For $\ell, L, M \in \Bbb N$,
 $
 \ell \geq 4,   L < \ell - 2,
 $
 we introduce  
 auxiliary 
 graphs 
  $$
G_{2, M}[\ell, L] = G(\mathcal V_{2, M}[\ell, L]  , \mathcal E_{2, M}[\ell, L]  ), \quad 
G_{M, 2}[\ell, L] = G(\mathcal V_{M, 2}[\ell, L]  , \mathcal E_{M, 2}[\ell, L]   ).
 $$
  In both vertex sets $\mathcal V_{2, M}[\ell, L]  $   and $\mathcal V_{M, 2}[\ell, L]  $ there are vertices
 $$
 V(l), \qquad 0\leq l \leq L,
 $$
 and
 $$
 V_0(l,m), V_1(l,m), \qquad 1 \leq m \leq M,L + 2 \leq l < \ell,
 $$
 and in  both edge sets $\mathcal E_{2, M}[\ell, L]  $   and $\mathcal E_{M, 2}[\ell, L]  $ there are edges
 $$
 f(l), \qquad 0\leq l \leq L,
 $$
 and
 $$
 f_0(l,m), f_1(l,m), \qquad 1 \leq m \leq M,L + 2 \leq l < \ell,
 $$
 and
 $$
 e_0(m),  e_1(m),  \qquad 1 \leq m \leq M.
 $$
 with source and target vertices partially given by
 $$
 s( f(l) ) = V(l-1  ), \qquad 1\leq l \leq L,
 $$
 $$
  s(  f_0(l,m) ) = V_0(l-1,m  ), \ \   s(  f_1(l,m) ) = V_1(l-1,m  ), \qquad 1 \leq m \leq M, 
  L+1 <  l <\ell,
 $$
$$
 t( f(l)) = V(l), \qquad 1\leq l \leq L,
 $$
 $$
 t(f_0(l,m) ) = V_0(l,m  ), \ \   t(  f_1(l,m) ) = V_1(l,m  ), \qquad 1 \leq m \leq M, 
 L+1 < l  <  \ell,
 $$
 and
 \begin{multline*}
 s(e_0(m)) = V_0(\ell - 1  ,m  )  ,  s(e_1(m))= V_0(\ell - 1  ,m  ) , 
 t(e_0(m)) =   t(e_1(m)) = V(0), \\ 1 \leq m \leq M.
 \end{multline*}
 In addition, the graph $G_{2, M}[\ell, L]$ has vertices
 $$
 V_0(L+1), V_1(L+1), 
 $$
 and edges
 $$
 f_0(L+1), f_1(L+1),
 $$
 and the definition of its source and target mappings is completed by setting
 $$
 s(   f_0(L+1) )   =  s(f_1(L+1) )   =V(L),
 $$
 $$
 t(f_0(L+1) ) = V_0(L+1), \qquad  t(f_1(L+1) ) = V_1(L+1),
 $$
 and
 $$
 s(f_0(L+2,m) = V_0(L+1), \ \ s(f_1(L+2,m) = V_1(L+1),\qquad 1 \leq m \leq M.
 $$ 
 In addition, the graph $G_{M, 2}[\ell, L]$ has vertices
 $$
 V(L+1, m), \qquad 1 \leq m \leq M,
 $$
 and edges
 $$
 f (L+1, m), \qquad 1 \leq m \leq M,
 $$
 and the definition of its source and target mappings is completed by setting
 $$
 s( f (L+1, m) ) =  V(L) , \ \ t(  f (L+1, m)  ) =   V(L+1, m) , \qquad 1 \leq m \leq M,
 $$
 and
 $$
 s(f_0(L+2, m) =  s(f_1(L+2, m) =  V(L+1, m) , \qquad  1 \leq m \leq M.
 $$
 The graphs $G(\mathcal V_{2, M}[\ell, L]   ,  \mathcal F_{G_{2, M}[\ell, L]} )  $ and 
 $G( \mathcal V_{M, 2}[\ell, L]  , \mathcal F_{G_{2, M}[\ell, L]}  )   $ are directed trees.
 
\begin{figure}[htbp]
\begin{center}
{
\unitlength 0.1in
\begin{picture}( 28.0000, 21.7000)( 15.0000,-27.7000)
%
\special{pn 20}%
\special{ar 2900 2630 140 140  0.0000000 6.2831853}%
%
\special{pn 20}%
\special{pa 2900 2490}%
\special{pa 2900 2490}%
\special{fp}%
%
\special{pn 20}%
\special{pa 2900 2490}%
\special{pa 2900 2126}%
\special{fp}%
\special{sh 1}%
\special{pa 2900 2126}%
\special{pa 2880 2194}%
\special{pa 2900 2180}%
\special{pa 2920 2194}%
\special{pa 2900 2126}%
\special{fp}%
%
\special{pn 20}%
\special{pa 2952 1938}%
\special{pa 3232 1658}%
\special{fp}%
\special{sh 1}%
\special{pa 3232 1658}%
\special{pa 3172 1690}%
\special{pa 3194 1696}%
\special{pa 3200 1718}%
\special{pa 3232 1658}%
\special{fp}%
%
\special{pn 20}%
\special{pa 3376 1496}%
\special{pa 3564 1148}%
\special{fp}%
\special{sh 1}%
\special{pa 3564 1148}%
\special{pa 3514 1196}%
\special{pa 3538 1194}%
\special{pa 3550 1216}%
\special{pa 3564 1148}%
\special{fp}%
%
\special{pn 20}%
\special{pa 3250 1482}%
\special{pa 3064 1132}%
\special{fp}%
\special{sh 1}%
\special{pa 3064 1132}%
\special{pa 3078 1200}%
\special{pa 3090 1178}%
\special{pa 3114 1182}%
\special{pa 3064 1132}%
\special{fp}%
%
\special{pn 20}%
\special{pa 3310 1476}%
\special{pa 3316 1154}%
\special{fp}%
\special{sh 1}%
\special{pa 3316 1154}%
\special{pa 3296 1220}%
\special{pa 3316 1206}%
\special{pa 3336 1220}%
\special{pa 3316 1154}%
\special{fp}%
%
\special{pn 20}%
\special{pa 2838 1924}%
\special{pa 2558 1644}%
\special{fp}%
\special{sh 1}%
\special{pa 2558 1644}%
\special{pa 2590 1704}%
\special{pa 2596 1682}%
\special{pa 2618 1676}%
\special{pa 2558 1644}%
\special{fp}%
%
\special{pn 20}%
\special{pa 2424 1482}%
\special{pa 2218 1144}%
\special{fp}%
\special{sh 1}%
\special{pa 2218 1144}%
\special{pa 2236 1210}%
\special{pa 2246 1190}%
\special{pa 2270 1190}%
\special{pa 2218 1144}%
\special{fp}%
%
\special{pn 20}%
\special{pa 2550 1482}%
\special{pa 2736 1132}%
\special{fp}%
\special{sh 1}%
\special{pa 2736 1132}%
\special{pa 2686 1182}%
\special{pa 2710 1178}%
\special{pa 2722 1200}%
\special{pa 2736 1132}%
\special{fp}%
%
\special{pn 20}%
\special{pa 2492 1476}%
\special{pa 2484 1154}%
\special{fp}%
\special{sh 1}%
\special{pa 2484 1154}%
\special{pa 2466 1220}%
\special{pa 2486 1206}%
\special{pa 2506 1220}%
\special{pa 2484 1154}%
\special{fp}%
%
\special{pn 20}%
\special{ar 2900 2022 84 84  0.0000000 6.2831853}%
%
\special{pn 20}%
\special{ar 3320 1580 84 84  0.0000000 6.2831853}%
%
\special{pn 20}%
\special{ar 2508 1566 84 84  0.0000000 6.2831853}%
%
\special{pn 20}%
\special{ar 3600 1020 84 84  0.0000000 6.2831853}%
%
\special{pn 20}%
\special{ar 3320 1020 84 84  0.0000000 6.2831853}%
%
\special{pn 20}%
\special{ar 3048 1020 84 84  0.0000000 6.2831853}%
%
\special{pn 20}%
\special{ar 2746 1028 84 84  0.0000000 6.2831853}%
%
\special{pn 20}%
\special{ar 2488 1020 84 84  0.0000000 6.2831853}%
%
\special{pn 20}%
\special{ar 2200 1028 84 84  0.0000000 6.2831853}%
%
\special{pn 8}%
\special{pa 1920 880}%
\special{pa 2200 880}%
\special{fp}%
%
\special{pn 8}%
\special{pa 2200 930}%
\special{pa 2200 930}%
\special{fp}%
%
\special{pn 8}%
\special{pa 3600 880}%
\special{pa 3600 880}%
\special{fp}%
\special{pa 3600 916}%
\special{pa 3600 880}%
\special{fp}%
%
\special{pn 8}%
\special{pa 2200 880}%
\special{pa 2200 950}%
\special{fp}%
%
\special{pn 8}%
\special{pa 1920 2560}%
\special{pa 2726 2560}%
\special{pa 2726 2560}%
\special{fp}%
%
\special{pn 8}%
\special{pa 2698 2560}%
\special{pa 2726 2560}%
\special{fp}%
\special{sh 1}%
\special{pa 2726 2560}%
\special{pa 2658 2540}%
\special{pa 2672 2560}%
\special{pa 2658 2580}%
\special{pa 2726 2560}%
\special{fp}%
%
\special{pn 8}%
\special{pa 4300 600}%
\special{pa 4300 2700}%
\special{fp}%
%
\special{pn 8}%
\special{pa 1920 2560}%
\special{pa 1920 888}%
\special{fp}%
%
\special{pn 8}%
\special{pa 1710 740}%
\special{pa 1710 2630}%
\special{fp}%
%
\special{pn 8}%
\special{pa 1710 2630}%
\special{pa 2718 2630}%
\special{pa 2718 2630}%
\special{pa 2718 2630}%
\special{fp}%
%
\special{pn 8}%
\special{pa 2690 2630}%
\special{pa 2718 2630}%
\special{fp}%
\special{sh 1}%
\special{pa 2718 2630}%
\special{pa 2652 2610}%
\special{pa 2666 2630}%
\special{pa 2652 2650}%
\special{pa 2718 2630}%
\special{fp}%
%
\special{pn 8}%
\special{pa 4300 2698}%
\special{pa 3090 2698}%
\special{pa 3090 2698}%
\special{fp}%
%
\special{pn 8}%
\special{pa 3116 2698}%
\special{pa 3090 2698}%
\special{fp}%
\special{sh 1}%
\special{pa 3090 2698}%
\special{pa 3156 2718}%
\special{pa 3142 2698}%
\special{pa 3156 2678}%
\special{pa 3090 2698}%
\special{fp}%
%
\special{pn 8}%
\special{pa 1500 2700}%
\special{pa 2718 2700}%
\special{pa 2718 2700}%
\special{fp}%
%
\special{pn 8}%
\special{pa 2690 2700}%
\special{pa 2718 2700}%
\special{fp}%
\special{sh 1}%
\special{pa 2718 2700}%
\special{pa 2652 2680}%
\special{pa 2666 2700}%
\special{pa 2652 2720}%
\special{pa 2718 2700}%
\special{fp}%
%
\special{pn 8}%
\special{pa 1500 600}%
\special{pa 1500 2700}%
\special{fp}%
%
\special{pn 8}%
\special{pa 1500 600}%
\special{pa 2746 600}%
\special{pa 2746 930}%
\special{pa 2746 930}%
\special{pa 2746 930}%
\special{fp}%
%
\special{pn 8}%
\special{pa 1710 740}%
\special{pa 2480 740}%
\special{pa 2480 930}%
\special{pa 2480 936}%
\special{pa 2480 936}%
\special{fp}%
%
\special{pn 8}%
\special{pa 3600 880}%
\special{pa 3880 880}%
\special{fp}%
%
\special{pn 8}%
\special{pa 3880 2560}%
\special{pa 3880 888}%
\special{fp}%
%
\special{pn 8}%
\special{pa 3880 2560}%
\special{pa 3090 2560}%
\special{pa 3090 2560}%
\special{pa 3090 2560}%
\special{fp}%
%
\special{pn 8}%
\special{pa 3118 2560}%
\special{pa 3090 2560}%
\special{fp}%
\special{sh 1}%
\special{pa 3090 2560}%
\special{pa 3156 2580}%
\special{pa 3142 2560}%
\special{pa 3156 2540}%
\special{pa 3090 2560}%
\special{fp}%
%
\special{pn 8}%
\special{pa 3320 740}%
\special{pa 4090 740}%
\special{pa 4090 2630}%
\special{pa 4090 2630}%
\special{pa 4090 2630}%
\special{fp}%
%
\special{pn 8}%
\special{pa 4090 2630}%
\special{pa 3090 2630}%
\special{pa 3090 2630}%
\special{pa 3090 2630}%
\special{fp}%
%
\special{pn 8}%
\special{pa 3118 2630}%
\special{pa 3090 2630}%
\special{fp}%
\special{sh 1}%
\special{pa 3090 2630}%
\special{pa 3156 2650}%
\special{pa 3142 2630}%
\special{pa 3156 2610}%
\special{pa 3090 2630}%
\special{fp}%
%
\special{pn 8}%
\special{pa 4300 600}%
\special{pa 3054 600}%
\special{pa 3054 930}%
\special{pa 3054 930}%
\special{pa 3054 930}%
\special{fp}%
%
\special{pn 8}%
\special{pa 3320 740}%
\special{pa 3320 930}%
\special{fp}%
\end{picture}%
}
\end{center}
\caption{$G_{2,3}(4.1)$}
\end{figure}
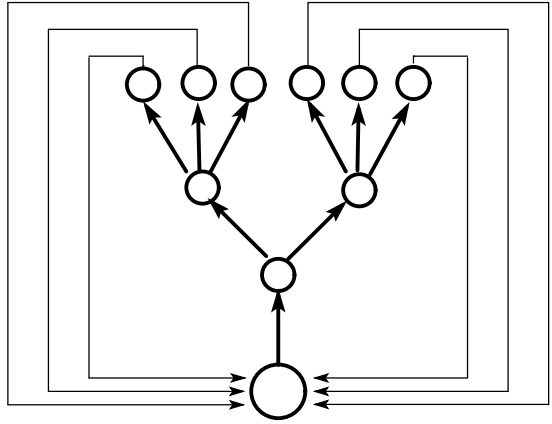


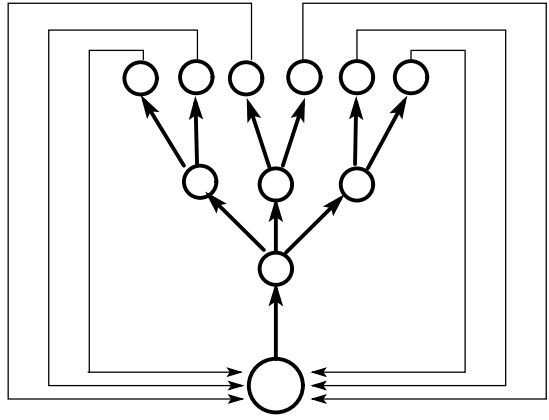
\begin{figure}[htbp]
\begin{center}
{\unitlength 0.1in
\begin{picture}( 27.8600, 21.3500)( 15.2000,-27.8500)
%
\special{pn 20}%
\special{ar 2906 2646 140 140  0.0000000 6.2831853}%
%
\special{pn 20}%
\special{pa 2906 2506}%
\special{pa 2906 2506}%
\special{fp}%
%
\special{pn 20}%
\special{pa 2906 2506}%
\special{pa 2906 2142}%
\special{fp}%
\special{sh 1}%
\special{pa 2906 2142}%
\special{pa 2886 2208}%
\special{pa 2906 2194}%
\special{pa 2926 2208}%
\special{pa 2906 2142}%
\special{fp}%
%
\special{pn 20}%
\special{pa 2958 1952}%
\special{pa 3238 1672}%
\special{fp}%
\special{sh 1}%
\special{pa 3238 1672}%
\special{pa 3178 1706}%
\special{pa 3200 1710}%
\special{pa 3206 1734}%
\special{pa 3238 1672}%
\special{fp}%
%
\special{pn 20}%
\special{pa 3382 1512}%
\special{pa 3570 1162}%
\special{fp}%
\special{sh 1}%
\special{pa 3570 1162}%
\special{pa 3520 1212}%
\special{pa 3544 1210}%
\special{pa 3556 1230}%
\special{pa 3570 1162}%
\special{fp}%
%
\special{pn 20}%
\special{pa 3316 1490}%
\special{pa 3322 1168}%
\special{fp}%
\special{sh 1}%
\special{pa 3322 1168}%
\special{pa 3302 1234}%
\special{pa 3322 1222}%
\special{pa 3342 1236}%
\special{pa 3322 1168}%
\special{fp}%
%
\special{pn 20}%
\special{pa 2844 1938}%
\special{pa 2564 1658}%
\special{fp}%
\special{sh 1}%
\special{pa 2564 1658}%
\special{pa 2596 1720}%
\special{pa 2602 1696}%
\special{pa 2624 1692}%
\special{pa 2564 1658}%
\special{fp}%
%
\special{pn 20}%
\special{pa 2430 1498}%
\special{pa 2224 1158}%
\special{fp}%
\special{sh 1}%
\special{pa 2224 1158}%
\special{pa 2242 1226}%
\special{pa 2252 1204}%
\special{pa 2276 1206}%
\special{pa 2224 1158}%
\special{fp}%
%
\special{pn 20}%
\special{pa 2498 1490}%
\special{pa 2490 1168}%
\special{fp}%
\special{sh 1}%
\special{pa 2490 1168}%
\special{pa 2472 1236}%
\special{pa 2492 1222}%
\special{pa 2512 1234}%
\special{pa 2490 1168}%
\special{fp}%
%
\special{pn 20}%
\special{ar 2906 2036 84 84  0.0000000 6.2831853}%
%
\special{pn 20}%
\special{ar 3326 1596 84 84  0.0000000 6.2831853}%
%
\special{pn 20}%
\special{ar 2514 1582 84 84  0.0000000 6.2831853}%
%
\special{pn 20}%
\special{ar 3606 1036 84 84  0.0000000 6.2831853}%
%
\special{pn 20}%
\special{ar 3326 1036 84 84  0.0000000 6.2831853}%
%
\special{pn 20}%
\special{ar 3054 1036 84 84  0.0000000 6.2831853}%
%
\special{pn 20}%
\special{ar 2752 1042 84 84  0.0000000 6.2831853}%
%
\special{pn 20}%
\special{ar 2494 1036 84 84  0.0000000 6.2831853}%
%
\special{pn 20}%
\special{ar 2206 1042 84 84  0.0000000 6.2831853}%
%
\special{pn 20}%
\special{ar 2906 1596 84 84  0.0000000 6.2831853}%
%
\special{pn 20}%
\special{pa 2906 1946}%
\special{pa 2906 1700}%
\special{fp}%
\special{sh 1}%
\special{pa 2906 1700}%
\special{pa 2886 1768}%
\special{pa 2906 1754}%
\special{pa 2926 1768}%
\special{pa 2906 1700}%
\special{fp}%
%
\special{pn 20}%
\special{pa 2872 1504}%
\special{pa 2766 1176}%
\special{fp}%
\special{sh 1}%
\special{pa 2766 1176}%
\special{pa 2768 1246}%
\special{pa 2782 1226}%
\special{pa 2806 1232}%
\special{pa 2766 1176}%
\special{fp}%
%
\special{pn 20}%
\special{pa 2942 1498}%
\special{pa 3046 1176}%
\special{fp}%
\special{sh 1}%
\special{pa 3046 1176}%
\special{pa 3006 1232}%
\special{pa 3030 1226}%
\special{pa 3044 1246}%
\special{pa 3046 1176}%
\special{fp}%
%
\special{pn 8}%
\special{pa 3606 944}%
\special{pa 3606 896}%
\special{pa 3886 896}%
\special{pa 3886 2576}%
\special{pa 3886 2576}%
\special{pa 3886 2576}%
\special{fp}%
%
\special{pn 8}%
\special{pa 3886 2576}%
\special{pa 3096 2576}%
\special{pa 3096 2576}%
\special{pa 3096 2576}%
\special{fp}%
%
\special{pn 8}%
\special{pa 3124 2576}%
\special{pa 3096 2576}%
\special{fp}%
\special{sh 1}%
\special{pa 3096 2576}%
\special{pa 3162 2596}%
\special{pa 3148 2576}%
\special{pa 3162 2556}%
\special{pa 3096 2576}%
\special{fp}%
%
\special{pn 8}%
\special{pa 3326 944}%
\special{pa 3326 790}%
\special{pa 4096 790}%
\special{pa 4096 2646}%
\special{pa 3096 2646}%
\special{pa 3096 2646}%
\special{pa 3096 2646}%
\special{fp}%
%
\special{pn 8}%
\special{pa 3124 2646}%
\special{pa 3096 2646}%
\special{fp}%
\special{sh 1}%
\special{pa 3096 2646}%
\special{pa 3162 2666}%
\special{pa 3148 2646}%
\special{pa 3162 2626}%
\special{pa 3096 2646}%
\special{fp}%
%
\special{pn 8}%
\special{pa 3046 944}%
\special{pa 3046 650}%
\special{pa 4306 650}%
\special{pa 4306 2716}%
\special{pa 3096 2716}%
\special{pa 3096 2716}%
\special{pa 3096 2716}%
\special{fp}%
%
\special{pn 8}%
\special{pa 3124 2716}%
\special{pa 3096 2716}%
\special{fp}%
\special{sh 1}%
\special{pa 3096 2716}%
\special{pa 3162 2736}%
\special{pa 3148 2716}%
\special{pa 3162 2696}%
\special{pa 3096 2716}%
\special{fp}%
%
\special{pn 8}%
\special{pa 2220 944}%
\special{pa 2220 896}%
\special{pa 1940 896}%
\special{pa 1940 2576}%
\special{pa 1940 2576}%
\special{pa 1940 2576}%
\special{fp}%
%
\special{pn 8}%
\special{pa 1934 2576}%
\special{pa 2724 2576}%
\special{pa 2724 2576}%
\special{pa 2724 2576}%
\special{fp}%
%
\special{pn 8}%
\special{pa 2696 2576}%
\special{pa 2724 2576}%
\special{fp}%
\special{sh 1}%
\special{pa 2724 2576}%
\special{pa 2658 2556}%
\special{pa 2672 2576}%
\special{pa 2658 2596}%
\special{pa 2724 2576}%
\special{fp}%
%
\special{pn 8}%
\special{pa 2500 944}%
\special{pa 2500 790}%
\special{pa 1730 790}%
\special{pa 1730 2646}%
\special{pa 2732 2646}%
\special{pa 2732 2646}%
\special{pa 2732 2646}%
\special{fp}%
%
\special{pn 8}%
\special{pa 2704 2646}%
\special{pa 2732 2646}%
\special{fp}%
\special{sh 1}%
\special{pa 2732 2646}%
\special{pa 2664 2626}%
\special{pa 2678 2646}%
\special{pa 2664 2666}%
\special{pa 2732 2646}%
\special{fp}%
%
\special{pn 8}%
\special{pa 2780 944}%
\special{pa 2780 650}%
\special{pa 1520 650}%
\special{pa 1520 2716}%
\special{pa 2732 2716}%
\special{pa 2732 2716}%
\special{pa 2732 2716}%
\special{fp}%
%
\special{pn 8}%
\special{pa 2704 2716}%
\special{pa 2732 2716}%
\special{fp}%
\special{sh 1}%
\special{pa 2732 2716}%
\special{pa 2664 2696}%
\special{pa 2678 2716}%
\special{pa 2664 2736}%
\special{pa 2732 2716}%
\special{fp}%
\end{picture}%
}
\end{center}
\caption{$G_{3.2}(4.1)$}
\end{figure}


The invariants of topological conjugacy $\nu$ and $\Lambda$, or, equivalently,  $\nu$ and $I^{(0)}_2$, together do not separate  the graphs  $G[\ell , M ]$ from the graphs  $G_{2,M}[\ell , L ]$  nor  from the graphs 
$G_{M,2}[\ell , L ]$, but together with the invariant 
$I^{(0)}_4$  they do, as the next lemma shows.
 
 \begin{lemma}

\indent
(a) For $\ell, M \in \Bbb N,$ and 
$
G = G[\ell , M ]
$
 one has
 $$
 I^{(0)}_4(M\negthinspace {\scriptstyle D}({G})) = 8\Lambda(M\negthinspace {\scriptstyle D}({G})) + \nu(M\negthinspace {\scriptstyle D}({G}))^2 + 10 \nu(M\negthinspace {\scriptstyle D}({G})) + 4.
 $$
 
 (b1) For $\ell, M \in \Bbb N, \ell > 3,  2 \leq L < \ell - 4,$ and 
$
G = G_{2,M}[\ell, L],
$
 one has
 \begin{multline*}
 I^{(0)}_4(M\negthinspace {\scriptstyle D}({G})) = 4\nu(M\negthinspace {\scriptstyle D}({G}))\Lambda(M\negthinspace {\scriptstyle D}({G})) + \nu(M\negthinspace {\scriptstyle D}({G}))^2 -2\nu(M\negthinspace {\scriptstyle D}({G})) + 8 \\
 +  
 4(1 - \nu(M\negthinspace {\scriptstyle D}({G})))L.
 \end{multline*}
 
 (b2) For $\ell, M \in \Bbb N, \ell > 3, 2 \leq L < \ell - 4,$ and 
$
G =  G_{M,2}[\ell, L]
$
 one has
 \begin{multline*}
 I^{(0)}_4(M\negthinspace {\scriptstyle D}({G})) = 4\nu(M\negthinspace {\scriptstyle D}({G}))\Lambda(M\negthinspace {\scriptstyle D}({G}))  + \tfrac{1}{2}\nu(M\negthinspace {\scriptstyle D}({G}))^2+
 3\nu(M\negthinspace {\scriptstyle D}({G}))-4 \\
 + 
 4(1 -  \nu(M\negthinspace {\scriptstyle D}({G})))L.
 \end{multline*}
\end{lemma}
\begin{proof} 
The number of neutral periodic orbits of length 4 of
 $$
M\negthinspace {\scriptstyle D}( G[\ell , M ]) \
(M\negthinspace {\scriptstyle D}(G_{2,M}[\ell , L ]),  M\negthinspace {\scriptstyle D}(G_{M,2}[\ell , L ])),
$$
that contain the points that carry the infinite concatenation of words of the form $e^-e^+\widetilde{e}^-\widetilde{e}^+, e \neq  \widetilde{e},$ is equal to
$$
1 +M(M-1) \
(1 +M(M-1), \tfrac{1}{2}M(M+1),
$$
and the number of neutral periodic orbits of length 4 of  $$
M\negthinspace {\scriptstyle D}( G[\ell , M ]) \
(  M\negthinspace {\scriptstyle D}(G_{2,M}[\ell , L ]),  M\negthinspace {\scriptstyle D}(G_{M,2}[\ell , L ])),
$$
 that contain the points that carry the infinite concatenation of words of the form
 $e^-\widetilde{e}^-\widetilde{e}^+ \widetilde{e}^-,t(e) = s(\widetilde{e})   ,$ is equal to
$$
2\ell + 6M -2 \
(2M\ell +2  - 2ML, 2\ell M + M - 1 - L). \qed
$$
\renewcommand{\qedsymbol}{}
\end{proof}

\begin{lemma}
For $\ell > 4$, and $M\in \Bbb N$ and for $G(\mathcal V, \mathcal E   )=G[\ell, M]$
 one has that
 \begin{align*}
\Xi^{(e)}_{\ell+ 2}(M\negthinspace {\scriptstyle D}({G}))=&\Lambda(M\negthinspace {\scriptstyle D}({G}))+
\tfrac{1}{2}\nu(M\negthinspace {\scriptstyle D}({G})), 
\\
\Xi^{(e)}_{\ell+ 4}(M\negthinspace {\scriptstyle D}({G}) )=&(\Lambda(M\negthinspace {\scriptstyle D}({G}) )
+
\tfrac{1}{2}\nu(M\negthinspace {\scriptstyle D}({G})) )^2 + 
\\ 
&\Lambda(M\negthinspace {\scriptstyle D}({G}) ) +
2\nu(M\negthinspace {\scriptstyle D}({G})) -2, 
\ \ e  \in \mathcal E \setminus \mathcal F_G.
\end{align*}
\end{lemma}
\begin{proof}
Let $ e  \in \mathcal E \setminus \mathcal F_G$, and let $ O^{(e)}$ be the shortest periodic orbit of $M\negthinspace {\scriptstyle D}({G})$ with negative multiplier $e $. 

All periodic orbits of 
$M\negthinspace {\scriptstyle D}({G}) $ of length $\Lambda(M\negthinspace {\scriptstyle D}({G}))+2$  with multiplier $e^- $
are obtained by  inserting a word of the form $g^-g^+$, where the source vertex of the edge $g^-$ is transversed by $ O^{(e)}$, into $ O^{(e)}$. The number of these words is 
$\ell + M$. 

All periodic orbits of 
$M\negthinspace {\scriptstyle D}({G}) $ of length $\Lambda(M\negthinspace {\scriptstyle D}({G}) )+4$  with multiplier $e^- $
are obtained by either inserting two words  of the form $g^-g^+$, where the source vertex of the edge $g^-$ is transversed by $ O^{(e)}$, into $ O^{(e)}$, or by inserting a word of the form 
$g^- \widetilde g^-\widetilde g^+g^+ , t(g) = s(\widetilde g),$ 
into $ O^{(e)}$, where the source vertex of the edge $g^-$ is transversed by $ O^{(e)}$, into 
$ O^{(e)}$, and the number of these words is $\ell + 4M-2$. 
\end{proof}

\begin{theorem}
For a finite directed graph $G(\mathcal V, \mathcal E)$ there exist 
$  \ell>4, M \in \Bbb N$, such that there is a topological conjugacy 
\begin{align*}
M\negthinspace {\scriptstyle D}({G})
 \simeq M\negthinspace {\scriptstyle D}(G[  \ell,  M  ]), \tag {3.III.1}
\end{align*}
if and only if 
there is a Dyck inverse monoid associated to $M\negthinspace {\scriptstyle D}({G}))$, all  $\Lambda(e), e \in \mathcal E \setminus \mathcal F_G,$ have the same value, and
\begin{align*}
 \tfrac{1}{2}I^{(0)}_2(M\negthinspace {\scriptstyle D}({G}))=2\Lambda (M\negthinspace {\scriptstyle D}({G}))+\nu(M\negthinspace {\scriptstyle D}({G}))-2, \tag A
\end{align*}
\begin{align*}
 I^{(0)}_4(M\negthinspace {\scriptstyle D}({G})) = 8\Lambda(M\negthinspace {\scriptstyle D}({G})) + \nu(M\negthinspace {\scriptstyle D}({G}))^2 + 10 \nu(M\negthinspace {\scriptstyle D}({G})) + 4,\tag B
\end{align*}
\begin{align*}
\Xi^{(e)}_{\ell+ 2}(M\negthinspace {\scriptstyle D}({G}))=\Lambda(M\negthinspace {\scriptstyle D}({G}) )+
\tfrac{1}{2}\nu(M\negthinspace {\scriptstyle D}({G})), \qquad e  \in \mathcal E \setminus \mathcal F_G,\tag C
\end{align*}
\begin{align*}
\Xi^{(e)}_{ \ell+4}(M\negthinspace {\scriptstyle D}({G}))=&(\Lambda(M\negthinspace {\scriptstyle D}({G}) ) 
+\tfrac{1}{2}\nu(M\negthinspace {\scriptstyle D}({G})) )^2 +  
\tag D 
\\
&\Lambda(M\negthinspace {\scriptstyle D}({G}) ) +
2\nu(M\negthinspace {\scriptstyle D}({G})) -2, 
\qquad e  \in \mathcal E \setminus \mathcal F_G.
\end{align*}

If conditions ${(A) ( B) (C) (D)}$ are satisfied, then (3.III.1) holds for
\begin{align*}
\ell = \Lambda (M\negthinspace {\scriptstyle D}({G})),\qquad 
M = \tfrac{1}{2}\nu(M\negthinspace {\scriptstyle D}({G})).  
\tag {3.III.2}
\end{align*}

\end{theorem}
\begin{proof}
Necessity follows from Lemma 3.5, Lemma 3.6. and Lemma 3.7. To prove sufficiency, let $G = G(\mathcal V, \mathcal E)$ be a graph that satisfies the conditions of the theorem. 
We denote the root of the 
tree $G(\mathcal V, \mathcal F_G)$ by $V_0$. The out-degree of a vertex we denote by $D$. It follows from (A) that 
$$
D(V_0) \leq 2.
$$
In the case  $D(V_0)= 2$, one has by (A) and (C) that the 
tree $G(\mathcal V, \mathcal F_G)$  has two leaves, that have the same out-degree, and equations (3.III.2) follow. The task is to exclude the case $D(V_0)= 1$.

Assume, that  $D(V_0)= 1$. Let $L$ be maximal, such that the 
tree 
$G(\mathcal V, \mathcal F_G)$ has a single vertex $V_L$  at level $L$. One has that
$$
L < \Lambda(M\negthinspace {\scriptstyle D}({G})) - 2,
$$
since otherwise by (A),
$$
D(V_L) =  \Lambda(M\negthinspace {\scriptstyle D}({G})),
$$
which is either by (C) incompatible with $L > 0$, or it contradicts (B).

 By deriving a contradiction to (A)(B)(C)(D) we will exclude each of the following cases
 (c1 - 5):

\begin{align*}
D(V_L) >  \tfrac{1}{2}\nu(M\negthinspace {\scriptstyle D}({G})) +1, \tag {c1}
\end{align*}
\begin{align*}
D(V_L)=  \tfrac{1}{2}\nu(M\negthinspace {\scriptstyle D}({G})) +1, \tag {c2}
\end{align*}
\begin{align*}
D(V_L) =\tfrac{1}{2}\nu(M\negthinspace {\scriptstyle D}({G})), \tag {c3}
\end{align*}
\begin{align*}
 \tfrac{1}{2}\nu(M\negthinspace {\scriptstyle D}({G})) > D(V_L) > 2, \tag {c4}
\end{align*}
\begin{align*}
D(V_L)=2. \tag {c5}
\end{align*}

We consider cycles
$$
b = (e_k)_{1 \leq k\leq  \Lambda(M\negthinspace {\scriptscriptstyle D}({G}))}
$$
in $G$, such that 
$
s(e_1) = V_L.
$
By (C)
\begin{align*}
\sum_{1 \leq k\leq  \Lambda(M\negthinspace {\scriptscriptstyle D}({G}))}  ( D(s(e_k)) - 1) = \tfrac{1}{2}\nu(M\negthinspace {\scriptstyle D}({G})). \tag           {3.III.3}
\end{align*}

In case (c1) one has a contradiction to (3.III.3) and therefore to (C).

Case (c2) is by (C) only possible if
$$
\nu(M\negthinspace {\scriptstyle D}({G})) = D(V_L) = 2,
$$
which contradicts (A).

In case (c3)  it follows from (3.III.3) for the cycle 
$b= (e_k)_{1 \leq k\leq  \Lambda(M\negthinspace {\scriptscriptstyle D}({G}))}$, that
\begin{align*}
\sum_{1 < k\leq  \Lambda(M\negthinspace {\scriptscriptstyle D}({G}))}  ( D(s(e_k)) - 1) = 1,
\end{align*}
and by (D) the only other vertex besides $s(e_1)$, that is traversed by $b$, that has an out-degree, that exceeds one (and is equal to two), is necessarily $s(e_2)$. This means that $G$ is isomorphic to $G_{\tfrac{1}{2}\nu(M\negthinspace {\scriptscriptstyle D}({G})),2}(\Lambda(M\negthinspace {\scriptstyle D}({G})), L) $, and by Lemma 3 (A) and (B) yield a contradiction.

For case (c4) we set
$$
d = D(V_L) - 2,
$$
and we have from (3.III.3) that
\begin{align*}
\sum_{1 < k\leq 
 \Lambda(M\negthinspace {\scriptscriptstyle D}({G}))}  ( D(s(e_k)) - 1) =\tfrac{1}{2}\nu(M\negthinspace {\scriptstyle D}({G}))  - 1 - d. \tag{3.III.4}
\end{align*}
One has
\begin{align*}
(2 + d)(\tfrac{1}{2}\nu(M\negthinspace {\scriptstyle D}({G})) - d) > \nu(M\negthinspace {\scriptstyle D}({G})), \qquad 0 < d < \tfrac{1}{2}\nu(M\negthinspace {\scriptstyle D}({G}))  - 2.   \tag {3.III.5}
\end{align*}
It follows from (3.III.4) that 
$$
\nu(M\negthinspace {\scriptstyle D}({G})) \geq D(V_L)( \tfrac{1}{2}\nu(M\negthinspace {\scriptstyle D}({G})) - d),
$$
which contradicts (3.III.5).

With
$
G_{2,\tfrac{1}{2}\nu(M\negthinspace {\scriptscriptstyle D}({G}))}(\Lambda(M\negthinspace {\scriptstyle D}({G})), L)
$
in place of $G_{\tfrac{1}{2}\nu(M\negthinspace {\scriptscriptstyle D}(G)),2}(\Lambda(
M\negthinspace {\scriptstyle D}({G})
), L) $ and with statement (b2) of Lemma 3.6 in place of statement (b1), one has for case (c5)  the same argument as for case (c3).
\end{proof}
\begin{corollary}
For directed graphs $G(\mathcal V, \mathcal E )$, such that  $\mathcal S(M\negthinspace {\scriptstyle D}(G(\mathcal V, \mathcal E )))$  is  a Dyck inverse monoid, and
that satisfy conditions (A)(B)(C)(D), the topological  conjugacy of the Markov-Dyck shifts $M\negthinspace {\scriptstyle D}(G(\mathcal V  , \mathcal E ) )$ implies the isomorphism of the graphs $G(\mathcal V  , \mathcal E )$.
\end{corollary}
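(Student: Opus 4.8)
The plan is to argue exactly as Corollary 3.2 and Corollary 3.4 are derived from Theorem 3.1 and Theorem 3.3: the structural work has all been done in Theorem 3.8, and what remains is to observe that the two parameters $\ell$ and $M$ of the canonical model $G[\ell,M]$ are determined by invariants of topological conjugacy, so that two graphs of the kind under consideration whose Markov-Dyck shifts are topologically conjugate are modelled by one and the same canonical graph.

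In detail: let $G(\mathcal V,\mathcal E)$ and $G'(\mathcal V',\mathcal E')$ each have a Dyck inverse monoid associated to their Markov-Dyck shift and each satisfy conditions $(A),(B),(C),(D)$, and suppose $M_D(G)\simeq M_D(G')$. First I would note that $\nu$ and $\Lambda$ are invariants of topological conjugacy: $\nu(M_D(G))=\card(\mathcal E\setminus\mathcal F_G)$ is the number of negative multipliers, an invariant by \cite[Proposition 4.2]{HIK}, and since the hypotheses force all the $\Lambda^{(e)}$ to agree, their common value $\Lambda(M_D(G))$ is recovered from the invariant set $\{\Lambda^{(e)}(M_D(G)):e\in\mathcal E\setminus\mathcal F_G\}$. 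Hence
$$
\Lambda(M_D(G))=\Lambda(M_D(G')),\qquad \nu(M_D(G))=\nu(M_D(G')).
$$
Next I would apply Theorem 3.8 to $G$ and to $G'$. By formula (3.IV.2) --- and by the structural analysis in the proof of that theorem, which actually identifies such a graph, up to isomorphism, with a canonical graph $G[\ell,M]$ --- each of $G$ and $G'$ is isomorphic to $G[\ell,M]$ with $\ell=\Lambda(M_D(G))$ and $M=\tfrac12\nu(M_D(G))$, and by the displayed equalities this is the same canonical graph in both cases. Therefore $G$ is isomorphic to $G'$.

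The only loose ends are matters of bookkeeping. In conditions $(C)$ and $(D)$ for a general $G$ the symbol $\ell$ is to be read as $\Lambda(M_D(G))$, and, as in Theorem 3.8, one is in the range $\ell>4$; the finitely many residual values $\Lambda(M_D(G))\le4$, where the contracting subtree has bounded height, either fall under the families $\bold{F}_{I}$, $\bold{F}_{II}$ of Corollaries 3.2 and 3.4 or are disposed of by a direct finite check. I do not expect a genuine obstacle beyond this: the hard part --- excluding the possibility $D(V_0)=1$ by comparison with the auxiliary families $G_{2,M}[\ell,L]$ and $G_{M,2}[\ell,L]$ via the invariant $I^{(0)}_4$ --- has already been carried out in the proof of Theorem 3.8, and the corollary merely invokes it, together with the remark that (3.IV.2) expresses the parameters of the canonical model through the topological conjugacy invariants $\Lambda$ and $\nu$.
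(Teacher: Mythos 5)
Your argument is essentially the paper's own proof: the paper disposes of the corollary in one line by noting that in (3.IV.2) the parameters $\ell$ and $M$ of the canonical model are expressed through the conjugacy invariants $\Lambda$ and $\nu$, which is exactly the reduction you carry out (your extra remarks about the range $\ell>4$ are harmless bookkeeping). So the proposal is correct and takes the same route, only spelled out in more detail.
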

\begin{proof}
In (3.III.2) the data $ [\ell,M]$ are expressed in terms of invariants of topological conjugacy.
\end{proof}

\medskip

\par\noindent Wolfgang Krieger
\par\noindent Institut f\"ur Angewandte Mathematik, 
\par\noindent  Universit\"at Heidelberg,
\par\noindent Im Neuenheimer Feld 205, 
 \par\noindent 69120 Heidelberg,
 \par\noindent Germany
 \par\noindent krieger@math.uni-heidelberg.de

\bigskip

\par\noindent Kengo Matsumoto
\par\noindent  Department of Mathematics,
\par\noindent  Joetsu University of Education, 
 \par\noindent Joetsu 943 - 8512,
 \par\noindent Japan
\par\noindent kengo@juen.ac.jp

 \end{document}